\newtheorem{definition}{Definition}
\newtheorem{proposition}{Proposition}
\newtheorem{remark}{Remark}
\newtheorem{theorem}{Theorem}
\title{Lagrange and Wolf Dualities in \\Nonholonomic Optimization}
\author{Constantin Udri\c ste, M\u ad\u alina Constantinescu, \\Ionel \c Tevy, Oltin Dogaru}
\date{}
\begin{document}

\maketitle

\begin{abstract}
This article deals with optimizing problems classified by the kinds of restrictions
as required in differential geometry and in mechanics: holonomic and nonholonomic. The central issue relates to
dual nonholonomic programs (what they mean and how they are solved?) when the nonholonomic
constraints are given by Pfaff equations. The original results are surprising and include
aspects derived from the Vranceanu theory of nonholonomic manifolds, from the geometric distributions theory and
from Darboux's theorem on canonical coordinates in which we can express a Pfaff form. On these ways we get also an
original Riemannian geometry attached to a given constrained optimization problem.
\end{abstract}

{\bf Mathematics Subject Classification 2010}: 49K35, 58A17.

{\bf Keywords:} optimization, Pfaff nonholonomic constraints, dual nonholonomic programs.

\section{Classical Lagrange and Wolfe \\dual programs}

In this Section, we re-discuss the well-known programs with
holonomic constraints insisting on the following issues \cite{B}, \cite{H}, \cite{R}:
(i) the Lagrange-dual problem with weak respectively strong duality;
(ii) the Wolfe-dual problem; (iii) pertinent examples.

\subsection{The Lagrange dual problem}

Let $D$ be a domain in $R^n$, let $x=(x^1,...,x^n)$ be a point in D and $f:D\to R$ and $g_\alpha: D\to R$ be convex functions.
Denote $g=(g_\alpha)$ and we introduce the set
$$\Omega = \{x\in D \,|\,g_\alpha(x)\leq 0,\,\alpha=1,...,m\}= \{x\in D \,|\,g(x)\preceq 0\}.$$
A complete notation for this set is $(\Omega, g, \preceq)$, but for short the sign $\preceq$
or the pair $(g,\preceq)$ are suppressed in the notation.
Let us consider the {\it convex program}
$$\min_x\{f(x)\,|\, x \in \Omega\}.\leqno(P)$$
The {\it Lagrange function (or Lagrangian)} of (P)
$$L(x, \lambda) = f(x)+ \sum_{\alpha=1}^m \lambda_\alpha g_\alpha(x)
= f(x)+<\lambda,g>, x\in D, \lambda\succeq 0$$
is convex in $x$ and linear in $\lambda$.

{\bf Remark} {\it We can create an original Riemannian geometry on the set of critical points,
using similar ideas we shall develop in a further 3.2.1}.

For all $\lambda\succeq 0$, the inequality
$$\sum_{\alpha =1}^m\lambda_\alpha g_\alpha(x)\leq 0, \,\,\forall x \in \Omega$$
holds. Consequently
$$L(x,\lambda)\leq f(x), \,\,\forall x \in \Omega,\,\, \forall \lambda\succeq 0.\leqno(1)$$
The equality holds iff (complementarity conditions)
$$\lambda_\alpha g_\alpha(x)=0 \,(\hbox{for each}\,\, \alpha =1,...,m).$$

Let us introduce the {\it Lagrange dual function}
$$\psi(\lambda)=\inf_{x\in D}\{f(x)+ \sum_{\alpha =1}^m \lambda_\alpha g_\alpha(x),\,\, x\in D,\,\, \lambda\succeq 0\}.$$
This function $\psi(\lambda)$ is concave, because it is a point-wise infimum of affine functions. Indeed,
using the linearity of $L(x,\lambda)$ with respect to $\lambda$, and introducing $\lambda^1\succeq 0$, $\lambda^2\succeq 0$,
and $0\leq t\leq 1$, we have
$$\psi(t\lambda^1 +(1-t)\lambda^2)=\inf_{x\in D}L(x,t\lambda^1 +(1-t)\lambda^2)$$
$$=\inf_{x\in D}(tL(x,\lambda^1) +(1-t)L(x,\lambda^2))\geq \inf_{x\in D}(tL(x,\lambda^1)) + \inf_{x\in D}((1-t)L(x,\lambda^2))$$
$$= t\inf_{x\in D}L(x,\lambda^1) + (1-t)\inf_{x\in D}L(x,\lambda^2)=t\psi(\lambda^1)+ (1-t)\psi(\lambda^2).$$

\begin{definition}
The problem
$$\sup_\lambda \,\{\psi(\lambda)\,|\, \lambda\succeq 0\}$$
is the so-called {\it Lagrange dual problem} of (P).
\end{definition}

The Lagrange dual problem can be called convex
because it is equivalent to the convex problem
$$\inf_\lambda\,\{-\psi(\lambda)\,|\, \lambda\succeq 0\}.$$

The Lagrange-dual problem is also defined
in this way if (P) is not convex. The following theorem holds also in that case.

\begin{theorem} ({\bf weak duality}) The dual function yields lower bounds of the initial optimal value $f_*$, i.e., for any
$\lambda$, we have $\psi(\lambda)\leq f_*$. In other words,
$$\sup_\lambda\,\{\psi(\lambda)\,|\, \lambda\succeq 0\}\leq \inf_{x\in \Omega}\{f(x)\}.$$
\end{theorem}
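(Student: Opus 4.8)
The plan is to obtain the global inequality directly from the pointwise inequality (1) together with the elementary fact that restricting an infimum to a smaller set can only raise it. Since everything reduces to monotonicity of the infimum and the already-established bound $L(x,\lambda)\leq f(x)$ on $\Omega$, no convexity or regularity assumptions on $f$ or $g$ are actually used in this direction. This is precisely why weak duality, as the statement notes, continues to hold even when (P) is nonconvex.

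First I would fix an arbitrary multiplier $\lambda\succeq 0$ and exploit the inclusion $\Omega\subseteq D$. Because the infimum defining $\psi(\lambda)$ ranges over all of $D$, while $\Omega$ is only a subset, enlarging the feasible region lowers the infimum, so
$$\psi(\lambda)=\inf_{x\in D}L(x,\lambda)\leq \inf_{x\in\Omega}L(x,\lambda).$$

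Next I would invoke inequality (1), namely $L(x,\lambda)\leq f(x)$ for every $x\in\Omega$. Taking the infimum over $x\in\Omega$ on both sides preserves the inequality, which yields
$$\inf_{x\in\Omega}L(x,\lambda)\leq \inf_{x\in\Omega}f(x)=f_*.$$
Chaining the two displays gives $\psi(\lambda)\leq f_*$ for the fixed $\lambda$.

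Finally, since this bound holds for every admissible $\lambda\succeq 0$ while the right-hand side $f_*$ is independent of $\lambda$, I would take the supremum over all feasible multipliers to conclude
$$\sup_\lambda\{\psi(\lambda)\,|\,\lambda\succeq 0\}\leq f_*=\inf_{x\in\Omega}\{f(x)\},$$
which is exactly the asserted inequality. There is no genuine obstacle here; the only step requiring a word of care is the very first one, where one must keep straight that enlarging the feasible set of a minimization (passing from $\Omega$ to $D$) decreases the infimum, the opposite direction to the bound produced afterward by (1).
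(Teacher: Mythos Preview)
Your proof is correct and follows exactly the same approach as the paper: the paper's proof is the single displayed chain $\psi(\lambda)=\inf_{x\in D}L(x,\lambda)\leq \inf_{x\in \Omega}L(x,\lambda)\leq \inf_{x\in \Omega}f(x)$, justified by $\Omega\subset D$ and relation~(1), which is precisely what you spell out in your first two steps. Your additional remark that no convexity is used here is also noted in the paper just before the theorem.
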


{\bf Proof} In the foregoing statements, we have the relation (1).
Since $\Omega\subset D$, for each $\lambda\succeq 0$, we find
$$\psi(\lambda)=\inf_{x\in D}L(x,\lambda)\leq \inf_{x\in \Omega}L(x,\lambda)\leq \inf_{x\in \Omega}f(x).$$
Thus the statement in the theorem is true.

The problem of finding the best lower bound on $f_*$ obtained from the Lagrange dual function is called the {\it
Lagrange dual problem} for the original or primal problem.

The optimal values may be different. However, they are equal if (P) satisfies the Slater
condition and has finite optimal value. This is the next result.

\begin{theorem} {\bf (strong duality)} If the program (P) satisfies the Slater condition and has finite optimal
value, then
$$\sup_\lambda\, \{\psi(\lambda)\,|\, \lambda\succeq 0\}= \inf_{x\in D}\{f(x)\,|\,\,g(x)\preceq 0\}.$$
Moreover, then the dual optimal value is attained.
\end{theorem}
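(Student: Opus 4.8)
The plan is to prove strong duality by a separating–hyperplane argument applied to the convex ``value set'' of the problem, with the Slater hypothesis used precisely to guarantee that the separating functional is non-vertical. Write $f_*=\inf_{x\in\Omega}f(x)$ for the primal optimal value, which is finite by assumption. By the weak duality theorem already proved, $\psi(\la)\le f_*$ for every $\la\succeq 0$, so $\sup_\la\{\psi(\la)\mid\la\succeq 0\}\le f_*$. Hence it suffices to exhibit a single multiplier $\la^*\succeq 0$ with $\psi(\la^*)\ge f_*$: this one inequality will simultaneously force equality of the two optimal values and show that the dual optimum is attained (at $\la^*$).

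First I would introduce the subset of $R^{m+1}$
$$\mathcal{A}=\{(u,t)\in R^m\times R \mid \exists\,x\in D,\ g_\al(x)\le u_\al\ (\al=1,\dots,m),\ f(x)\le t\}.$$
Using the convexity of $D$, of $f$, and of each $g_\al$, I would check that $\mathcal{A}$ is convex, and that it is upward monotone: if $(u,t)\in\mathcal{A}$, $u'\succeq u$ and $t'\ge t$, then $(u',t')\in\mathcal{A}$. From the definition of $f_*$, no point $(0,s)$ with $s<f_*$ lies in $\mathcal{A}$. Thus $\mathcal{A}$ and the convex ray $\{(0,s)\mid s<f_*\}$ are disjoint convex sets, and the separating hyperplane theorem furnishes a pair $(\mu,\nu)\in R^m\times R$, not both zero, and a constant $c$ with
$$\langle\mu,u\rangle+\nu t\ge c\ \ \text{for all }(u,t)\in\mathcal{A},\qquad \nu s\le c\ \ \text{for all }s<f_*.$$
The upward monotonicity of $\mathcal{A}$ immediately forces $\mu\succeq 0$ and $\nu\ge 0$, since otherwise the left-hand side of the first inequality would be unbounded below. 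Letting $s\uparrow f_*$ in the second inequality gives $\nu f_*\le c$, and evaluating the first inequality at the admissible point $(g(x),f(x))\in\mathcal{A}$ yields
$$\langle\mu,g(x)\rangle+\nu f(x)\ge c\ge\nu f_*\qquad\text{for all }x\in D.$$

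The main obstacle, and the only place where the hypotheses are essential, is ruling out the degenerate vertical case $\nu=0$; this is exactly what the Slater condition delivers. If $\nu=0$, then the second inequality gives $c\ge 0$, so $\langle\mu,g(x)\rangle\ge c\ge 0$ for all $x\in D$. Choosing the Slater point $\bar x\in D$ with $g_\al(\bar x)<0$ for every $\al$, and noting that $\mu\succeq 0$ is nonzero because $(\mu,\nu)\neq 0$, we obtain $\langle\mu,g(\bar x)\rangle<0$, a contradiction. Therefore $\nu>0$, and I would divide the last displayed inequality by $\nu$ and set $\la^*=\mu/\nu\succeq 0$, obtaining $f(x)+\langle\la^*,g(x)\rangle\ge f_*$ for all $x\in D$. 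Taking the infimum over $x\in D$ gives $\psi(\la^*)\ge f_*$, which combined with weak duality yields $\psi(\la^*)=f_*$. This proves the equality $\sup_\la\{\psi(\la)\mid\la\succeq 0\}=f_*$ and shows that the dual optimal value is attained, completing the argument.
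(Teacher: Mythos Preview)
Your argument is correct.  The paper, however, takes a shorter route: it invokes the Convex Farkas Lemma with threshold $a=f_*$ to obtain directly a multiplier $\la_*\succeq 0$ satisfying $L(x,\la_*)=f(x)+\langle\la_*,g(x)\rangle\ge f_*$ for all $x\in D$, and then concludes $\psi(\la_*)\ge f_*$ exactly as you do in your final paragraph.  Your separating-hyperplane construction on the epigraphical set $\mathcal{A}\subset R^{m+1}$, together with the use of the Slater point to rule out a vertical separator, is in fact the standard proof of the Convex Farkas Lemma itself, specialized to this situation; so rather than citing that lemma as a black box you have unpacked and reproved it inline.  The gain on your side is self-containment --- nothing is needed beyond finite-dimensional separation of disjoint convex sets --- while the paper's version is terser but defers all the work to an externally stated lemma.
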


{\bf Proof} Denote by $f_*$ the optimal value of (P). Taking $a = f_*$ in the Convex Farkas Lemma,
it follows that there exists a vector $\lambda_*=(\lambda_{1*},...,\lambda_{m*})\geq 0$ such that
$$L(x, \lambda_*) = f(x)+ \sum_{\alpha =1}^m \lambda_{\alpha *}\, g_\alpha(x)\geq f_*,\, \forall x \in D.$$
Using the definition of $\psi(\lambda_*)$ this implies $\psi(\lambda_*)\geq f_*$. By the weak duality theorem, it
follows that $\psi(\lambda_*)= f_*$. This not only proves that the optimal values are equal, but also
that $\lambda_*$ is an optimal solution of the dual problem.

{\bf Remark} Unlike in Linear Programming theory, the strong duality theorem cannot always be established for
general optimization problems.

\subsection{Topology of Lagrange multipliers}

Let $f:D\subseteq R^{n}\rightarrow R$ and $g:D\subseteq R^{n}\rightarrow
R^{p}$, $p<n,$ of class $C^{2}$ with $rank\,J_g = p$ in $D$. Let
$L\left( x,\lambda \right) =f\left( x\right) +\lambda \cdot g\left( x\right),$ with $%
\lambda \in R^{p}.$ We recall that
\begin{equation*}
H\left( x,\lambda \right)=\nabla f\left( x\right) +\lambda\cdot \nabla g\left( x\right) =0
\end{equation*}%
is the equation of critical points with respect to $x$ of the Lagrange function.

Let $A=\left\{ x\ |\ \exists \lambda \text{ cu }H\left( x,\lambda \right)
=0\right\} $ and $B=\left\{ \lambda \ |\ \exists x\text{ cu }H\left(
x,\lambda \right) =0\right\} .$ Introduce $h:A\rightarrow B$ such that
$H\left( x,h\left( x\right) \right) =0.$ The function $h$ is well defined
since the equation $H\left( x,\lambda \right) =0$ is linear in $\lambda$ (system with unique solution).
Hence, for any $\lambda \in B$, the set $h^{-1}\left( \lambda \right)
$ is non-void, and it consists of all critical points corresponding to $\lambda$
(set in which the nondegenerate critical points are isolated).

\begin{proposition}
Let $\lambda _{0}\in B$ such that there exists $x_{0}\in
h^{-1}\left( \lambda _{0}\right) $ with the property that  $x_{0}$ is
nondegenerate, i.e., the Hessian $d^{2}f\left( x_{0}\right) +\lambda _{0}\cdot d^{2}g\left(
x_{0}\right) $ is nondegenerate. Then $h$ admits a differentiable section $s_{\lambda _{0}}:I_{\lambda _{0}}\rightarrow A.$
\end{proposition}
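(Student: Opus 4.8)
The plan is to read the Proposition as an existence-and-regularity statement for the solutions of the critical-point equation $H(x,\lambda)=0$, and to deduce it from the Implicit Function Theorem applied to $H$ near the point $(x_0,\lambda_0)$. First I would record that, since $f$ and $g$ are of class $C^{2}$, the map
$$H:D\times R^{p}\to R^{n},\qquad H(x,\lambda)=\nabla f(x)+\lambda\cdot\nabla g(x)$$
is of class $C^{1}$, and that the hypothesis $x_{0}\in h^{-1}(\lambda_{0})$ means precisely $H(x_{0},\lambda_{0})=0$. The aim is then to solve this equation for $x$ as a differentiable function of $\lambda$ on a neighbourhood of $\lambda_{0}$.

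The decisive observation is that the partial differential of $H$ with respect to $x$ at $(x_{0},\lambda_{0})$ is exactly
$$\frac{\partial H}{\partial x}(x_{0},\lambda_{0})=d^{2}f(x_{0})+\lambda_{0}\cdot d^{2}g(x_{0}),$$
which is the Hessian appearing in the nondegeneracy assumption. Thus the hypothesis that $x_{0}$ is nondegenerate says exactly that this $n\times n$ block is invertible, and this is precisely the condition required to apply the Implicit Function Theorem. Doing so, I obtain an open neighbourhood $I_{\lambda_{0}}\subseteq R^{p}$ of $\lambda_{0}$ (shrunk if necessary so its image stays in the open set $D$) and a unique map $s_{\lambda_{0}}:I_{\lambda_{0}}\to D$ of class $C^{1}$ with $s_{\lambda_{0}}(\lambda_{0})=x_{0}$ and $H(s_{\lambda_{0}}(\lambda),\lambda)=0$ for every $\lambda\in I_{\lambda_{0}}$.

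It then remains to verify that $s_{\lambda_{0}}$ is genuinely a section of $h$. For each $\lambda\in I_{\lambda_{0}}$ the relation $H(s_{\lambda_{0}}(\lambda),\lambda)=0$ shows, on the one hand, that $s_{\lambda_{0}}(\lambda)\in A$, and on the other hand, since the equation $H(x,\cdot)=0$ is linear in its second argument and—by $\mathrm{rank}\,J_{g}=p$—has a unique solution (this being exactly what makes $h$ well defined), that $\lambda$ is forced to equal $h(s_{\lambda_{0}}(\lambda))$. Hence $h\circ s_{\lambda_{0}}=\mathrm{id}_{I_{\lambda_{0}}}$, which is the asserted differentiable section; as a by-product, $s_{\lambda_{0}}(\lambda)\in A$ together with $h(s_{\lambda_{0}}(\lambda))=\lambda$ shows a posteriori that $I_{\lambda_{0}}\subseteq B$, so the section takes values and arguments in the correct sets.

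I expect no deep obstacle here: the whole content is the identification of the nondegeneracy hypothesis with the invertibility of $\partial H/\partial x$, after which the Implicit Function Theorem applies verbatim. The only point demanding care is the final verification that $h\circ s_{\lambda_{0}}=\mathrm{id}$, which rests not on the Implicit Function Theorem but on the uniqueness of the Lagrange multiplier already invoked to define $h$; it is precisely this uniqueness, and not merely the existence supplied by the theorem, that pins $s_{\lambda_{0}}$ down as a section of $h$ rather than of some other solution branch.
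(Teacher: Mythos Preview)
Your proof is correct and follows essentially the same route as the paper: both identify $\partial H/\partial x(x_0,\lambda_0)$ with the Hessian $d^2f(x_0)+\lambda_0\cdot d^2g(x_0)$ and invoke the Implicit Function Theorem to produce the differentiable map $s_{\lambda_0}$. Your version is in fact more complete, since you explicitly verify $h\circ s_{\lambda_0}=\mathrm{id}$ using the uniqueness of the multiplier (from $\mathrm{rank}\,J_g=p$), a step the paper leaves implicit.
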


\begin{proof}
Since $\frac{\partial H}{\partial x}\left( x_{0},\lambda _{0}\right)
=d^{2}f\left( x_{0}\right) +\lambda _{0}\cdot d^{2}g\left( x_{0}\right)$ is non-degenerate, by hypothesis,
there exists a neighborhood $I_{\lambda _{0}}$ of $%
\lambda _{0}$ and a differentiable function $s_{\lambda
_{0}}:I_{\lambda _{0}}\rightarrow A$ such that $H\left(
s_{\lambda _{0}}\left( \lambda \right) ,\lambda \right) =0,$ $\forall
\lambda \in I_{\lambda _{0}}$ and $s_{\lambda _{0}}\left( \lambda
_{0}\right) =\lambda _{0}.$ Moreover, the function $s_{\lambda _{0}}$ is unique,
with these properties.
\end{proof}

For any $\lambda \in B$, let $S_{\lambda }$ be the set of all sections of $h$ defined
in a neighborhood of $\lambda,$ set which is eventually void.

\begin{remark}
(i) If  $h^{-1}\left( \lambda \right) $ contains at least one nondgenerate critical point,
then $S_{\lambda }$ is non-void. If $%
h^{-1}\left( \lambda \right) $ does not contain degenerate critical points,
then the sets $h^{-1}\left( \lambda \right) $ and $S_{\lambda }$
have the same cardinal and are discrete sets.

(ii) The set $C=\left\{ \lambda \in B\ |\text{ }S_{\lambda }\neq
\emptyset \right\} $ is open.
\end{remark}

In the following, we suppose that the set $S_{\lambda }$
is finite, for any $\lambda \in B.$ We can define $f^{\ast
}:B\rightarrow R$ by $f^{\ast }\left( \lambda \right) =\max\nolimits_{s\in
S_{\lambda }}f\left( s\left( \lambda \right) \right) ,$ if $S_{\lambda
}\neq \emptyset $ and $f^{\ast }\left( \lambda \right) =-\infty ,$ if $S_{\lambda }=\emptyset .$

\begin{proposition}
(i) For any $\lambda \in B$, we have
\begin{equation*}
f^{\ast }\left( \lambda \right) \leq \sup\nolimits_{x\in h^{-1}\left(
\lambda \right) }f\left( x\right) .
\end{equation*}

(ii) If $h^{-1}\left( \lambda \right) $ does not contain degenerate critical points, then
\begin{equation*}
f^{\ast }\left( \lambda \right) =\sup\nolimits_{x\in h^{-1}\left( \lambda
\right) }f\left( x\right) =\max\nolimits_{x\in h^{-1}\left( \lambda \right)
}f\left( x\right) .
\end{equation*}
\end{proposition}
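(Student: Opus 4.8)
The plan is to treat the two parts asymmetrically: part (i) is a direct consequence of the fact that every local section of $h$, evaluated at $\lambda$, lands in the fiber $h^{-1}(\lambda)$, whereas part (ii) requires the implicit function theorem, in the form of Proposition 1, to run the correspondence in the opposite direction and attach a section to each critical point in the fiber. For (i) I would first dispose of the case $S_\lambda=\emptyset$, in which $f^{\ast}(\lambda)=-\infty$ and the inequality holds trivially. When $S_\lambda\neq\emptyset$, I would take an arbitrary section $s\in S_\lambda$, defined on a neighborhood $I_\lambda$ of $\lambda$ with $H(s(\mu),\mu)=0$ for all $\mu\in I_\lambda$. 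Evaluating this identity at $\mu=\lambda$ shows $s(\lambda)\in h^{-1}(\lambda)$, so $f(s(\lambda))\leq\sup_{x\in h^{-1}(\lambda)}f(x)$. Taking the maximum over the finite set $S_\lambda$ gives $f^{\ast}(\lambda)\leq\sup_{x\in h^{-1}(\lambda)}f(x)$, which is (i).

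For (ii), assume $h^{-1}(\lambda)$ contains no degenerate critical point. To obtain the reverse inequality, I would pick any $x\in h^{-1}(\lambda)$; by hypothesis $x$ is a nondegenerate critical point with multiplier $\lambda$, so Proposition 1 applied at the pair $(x,\lambda)$ produces a differentiable section $s_x\in S_\lambda$ with $s_x(\lambda)=x$. Hence $f(x)=f(s_x(\lambda))\leq f^{\ast}(\lambda)$, and taking the supremum over $x\in h^{-1}(\lambda)$, combined with (i), yields $f^{\ast}(\lambda)=\sup_{x\in h^{-1}(\lambda)}f(x)$.

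It then remains to replace the supremum by a maximum, for which I would invoke Remark 1(i): under the no-degeneracy hypothesis the assignments $x\mapsto s_x$ and $s\mapsto s(\lambda)$ are mutually inverse bijections between $h^{-1}(\lambda)$ and $S_\lambda$, so $h^{-1}(\lambda)$ inherits the finiteness assumed for $S_\lambda$. The supremum of $f$ over a finite set is attained and therefore equals the maximum, closing the chain $f^{\ast}(\lambda)=\sup_{x\in h^{-1}(\lambda)}f(x)=\max_{x\in h^{-1}(\lambda)}f(x)$.

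The main obstacle is not computational but structural: one must verify that the section delivered by Proposition 1 genuinely belongs to $S_\lambda$, and that $x\mapsto s_x$ is both well defined and bijective. Both points hinge on the uniqueness clause of Proposition 1, namely that the section through a given nondegenerate critical point is unique. This uniqueness is precisely what forces $s_1(\lambda)=s_2(\lambda)$ to imply $s_1=s_2$ near $\lambda$, making $s\mapsto s(\lambda)$ injective and the correspondence a genuine bijection. Once this is secured, no further analysis is needed and the remaining steps are mere bookkeeping.
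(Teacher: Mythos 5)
Your proof is correct and follows essentially the same route as the paper's: part (i) by observing that sections evaluated at $\lambda$ land in $h^{-1}(\lambda)$, and part (ii) by using Proposition 1 to attach a section of $h$ to each nondegenerate critical point and Remark 1(i) to transfer the finiteness of $S_{\lambda}$ to $h^{-1}(\lambda)$. The only difference is cosmetic ordering: the paper first secures finiteness of $h^{-1}(\lambda)$ and then builds a section through the maximizer, while you first establish the equality with the supremum and then invoke finiteness to replace it by a maximum.
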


\begin{proof}
(i) Let $s_{0}\in S_{\lambda }$ cu $f\left( s_{0}\left( \lambda \right)
\right) =\max\nolimits_{s\in S_{\lambda }}f\left( s\left( \lambda \right)
\right) =f^{\ast }\left( \lambda \right) .$ Since $s_{0}\left( \lambda
\right) \in h^{-1}\left( \lambda \right) ,$ it follows that $f^{\ast
}\left( \lambda \right) \leq \sup\nolimits_{x\in h^{-1}\left( \lambda
\right) }f\left( x\right) .$

(2) By hypothesis, the sets
$S_{\lambda }$ \c{s}i $h^{-1}\left( \lambda \right) $ have the same cardinal, hence
 $h^{-1}\left( \lambda \right) $ is finite. Let $y_{0}\in h^{-1}\left( \lambda \right) $ with $f\left( y_{0}\right) =\max_{x\in
h^{-1}\left( \lambda \right) }f\left( x\right) .$ Since $\left(y_{0},\lambda \right) $ is a nondegenerate critical point, there exists $%
s_{1}\in S_{\lambda }$ with $s_{1}\left( \lambda \right) =y_{0}.$ Then, it follows
that $$\max\limits_{s\in S_{\lambda }}f\left( s\left( \lambda
\right) \right) \geq f\left( y_{0}\right) =\max_{x\in h^{-1}}\left( \lambda
\right)f\left( x\right) .$$
\end{proof}

\begin{proposition}
Let $\lambda _{0}\in B$ such that $h^{-1}\left( \lambda
_{0}\right) $ does not contain degenerate critical points. Suppose, also,
that $\ f|_{h^{-1}\left( \lambda _{0}\right) }$ is injective. Then there exists $s_{0}\in S_{\lambda _{0}},$ $s_{0}:I_{0}%
\rightarrow A$ such that $f^{\ast }\left( \lambda \right)
=f\left( s_{0}\left( \lambda \right) \right) ,$ for any $\lambda \in
I_{0}.$
\end{proposition}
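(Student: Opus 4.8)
The plan is to localize near $\lambda_0$, reduce $f^{*}$ to a maximum over the finitely many branches issuing from the fiber $h^{-1}(\lambda_0)$, and then win by a uniform gap supplied by the injectivity hypothesis.

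First I would pin down the value at $\lambda_0$. Since $h^{-1}(\lambda_0)$ contains no degenerate critical points, Proposition 2(ii) gives $f^{*}(\lambda_0)=\max_{x\in h^{-1}(\lambda_0)}f(x)$, attained at some $y_0\in h^{-1}(\lambda_0)$. The injectivity of $f|_{h^{-1}(\lambda_0)}$ makes $y_0$ the unique maximizer, and since the fiber is finite (its cardinal equals that of $S_{\lambda_0}$ by the Remark) the number $\delta:=\min_{y\in h^{-1}(\lambda_0),\,y\neq y_0}\bigl(f(y_0)-f(y)\bigr)$ is strictly positive.

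Next I would produce the branches. As $(y_0,\lambda_0)$ is nondegenerate, Proposition 1 yields a unique differentiable section $s_0:I_0\rightarrow A$ with $s_0(\lambda_0)=y_0$; applying it to each point of $h^{-1}(\lambda_0)=\{y_0,\dots,y_k\}$ and using that $S_{\lambda_0}$ and $h^{-1}(\lambda_0)$ have equal cardinal, I get $S_{\lambda_0}=\{s_0,\dots,s_k\}$ with $s_j(\lambda_0)=y_j$. Each map $\lambda\mapsto f(s_j(\lambda))$ is continuous, and $f(s_0(\lambda_0))-f(s_j(\lambda_0))=f(y_0)-f(y_j)\geq\delta$ for $j\neq 0$; shrinking $I_0$ so that $|f(s_j(\lambda))-f(y_j)|<\delta/2$ for all $j$ and all $\lambda\in I_0$ forces $f(s_0(\lambda))>f(s_j(\lambda))$ for every $j\neq 0$ and every $\lambda\in I_0$. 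Thus $f(s_0(\lambda))=\max_{0\le j\le k}f(s_j(\lambda))$ on $I_0$.

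It remains to identify this quantity with $f^{*}(\lambda)=\max_{s\in S_\lambda}f(s(\lambda))$. The inequality $f^{*}(\lambda)\geq f(s_0(\lambda))$ is free, because the germ of $s_0$ at $\lambda$ lies in $S_\lambda$. The hard part is the reverse inequality: I must exclude, for $\lambda$ close to $\lambda_0$, any section in $S_\lambda$ outside $\{s_0,\dots,s_k\}$ whose value could overtake $f(s_0(\lambda))$. For this I would use that nondegeneracy is an open condition---so each $s_j(\lambda)$ stays a nondegenerate, hence isolated, critical point for $\lambda\in I_0$---together with the local finiteness of the branches emanating from $h^{-1}(\lambda_0)$, to conclude that for $\lambda$ near $\lambda_0$ the germs in $S_\lambda$ are exactly those of $s_0,\dots,s_k$, whence $f^{*}(\lambda)=f(s_0(\lambda))$. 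I expect this uniform control of $S_\lambda$ in a full neighborhood of $\lambda_0$---ruling out spurious extra branches---to be the main obstacle, and it is precisely the place where the nondegeneracy of the entire fiber $h^{-1}(\lambda_0)$, and not merely of $y_0$, enters.
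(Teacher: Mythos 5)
Your first three steps are, in essentials, the paper's entire proof, written out with more care: the paper picks $s_0\in S_{\lambda_0}$ realizing $f^{\ast}(\lambda_0)=f\left(s_0(\lambda_0)\right)$, observes that injectivity of $f|_{h^{-1}(\lambda_0)}$ makes this maximum \emph{strict} over the remaining (finitely many) sections in $S_{\lambda_0}$, and then uses continuity of $f$ and finiteness of $S_{\lambda_0}$ to shrink $I_0$ so that $f\left(s_0(\lambda)\right)>f\left(s(\lambda)\right)$ for all $s\in S_{\lambda_0}$ and all $\lambda\in I_0$. Your detour through Proposition 2(ii), the unique maximizer $y_0$, and the explicit gap $\delta$ is the same argument with the constants made visible, so up to that point you and the paper coincide.

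Where you diverge is your last paragraph, and you should know that the ``hard part'' you isolate is not treated in the paper at all: the paper's proof compares $s_0$ only against the other members of $S_{\lambda_0}$ and then simply declares the result to be $f^{\ast}(\lambda)$, i.e.\ it implicitly identifies $\max_{s\in S_{\lambda}}f\left(s(\lambda)\right)$ with $\max_{s\in S_{\lambda_0}}f\left(s(\lambda)\right)$ for $\lambda\in I_0$. So you have flagged a gap in the paper's own argument rather than missed one of its steps. Be aware, however, that the remedy you sketch does not close that gap: openness of nondegeneracy and local finiteness only control the continuation of the branches through $h^{-1}(\lambda_0)$; they cannot exclude critical points entering the fiber $h^{-1}(\lambda)$ from outside every compact set as $\lambda$ moves off $\lambda_0$, and on such points $f$ can be arbitrarily large. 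For instance, with one constraint $g(x,y)=y$ and $f(x,y)=x+y/\sqrt{x}$ for large $x$, the fiber over $\lambda<0$ contains the nondegenerate critical point $x=1/\lambda^{2}$, $y=2x^{3/2}$, where $f=3/\lambda^{2}\to\infty$ as $\lambda\to 0^{-}$, so no single branch through $h^{-1}(0)$ can compute $f^{\ast}$ near $0$. What is actually needed is a properness assumption (e.g.\ that $h$ is proper over a neighborhood of $\lambda_0$, so that critical points over $\lambda_n\to\lambda_0$ must accumulate on $h^{-1}(\lambda_0)$ and hence, by the local uniqueness in Proposition 1, eventually lie on the branches $s_0,\dots,s_k$); neither the paper nor your proposal states it, and without it the reverse inequality --- and indeed the proposition as literally stated --- can fail.
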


\begin{proof}
Let $s_{0}\in S_{\lambda _{0}},$ $s_{0}:I_{0}\rightarrow A$ such that
$f^{\ast }\left( \lambda _{0}\right) =f\left( s_{0}\left( \lambda
_{0}\right) \right).$ Then $f^{\ast }\left( \lambda _{0}\right) =f\left(
s_{0}\left( \lambda _{0}\right) \right) >f\left( s\left( \lambda _{0}\right)
\right) ,$ $\forall s\in S_{\lambda _{0}},\ s:I_{s}\rightarrow A.$ Since
$f$ is continuous and the set $S_{\lambda _{0}}$ is finite,
it follows that we can restrict the neighborhood $I_{0}$ such that
$f\left( s_{0}\left( \lambda \right) \right) >f\left( s\left(
\lambda \right) \right) ,$ $\forall \lambda \in I_{0},\forall s\in
S_{\lambda _{0}},$ i.e., $f^{\ast }\left( \lambda \right) =f\left(
s_{0}\left( \lambda \right) \right) ,\forall \lambda \in S_{\lambda _{0}}.$
\end{proof}

\subsection {The meaning of Lagrange multiplier}

In our mostly geometrical discussion, $\lambda$ is just an artificial variable that lets us compare the
directions of the gradients without worrying about their magnitudes. To express mathematically the meaning of the multiplier,
write the constraint in the form $g(x) = c$ for some constant $c$.
This is mathematically equivalent to our usual $g(x)=0$, but allows us to easily describe a whole family of constraints.
For any given value of $c$, we can use Lagrange multipliers to find the optimal value of $f(x)$ and the point where it occurs.
Call that optimal value $f_*$, occurring at coordinates $x_0$ and with Lagrange multiplier  $\lambda _0$.
The answers we get will all depend on what value we used for $c$ in the constraint, so we can think of these as functions of $c$ :
$ f_*(c), x_0(c), \lambda_0(c)$ .
Of course, $ f(x)$ only depends on $c$ because the optimal coordinates $x_0$ depend on $c$: we could write it as $f_*(c)$.

To find how the optimal value changes when we change the constraint, just take the derivative
$$\frac {df_*}{dc} = \frac {\partial f_*}{\partial x^i_0}\, \frac {dx^i_0}{dc} = \nabla f_* \,\cdot \, \frac {dx_0}{dc}.$$
Use the equation of critical points to substitute $\nabla f_*  = - \lambda_0 \, \nabla g_0$ and obtain
$$\frac {df_*}{dc} = - \lambda_0 \, \nabla g_0 \,\cdot \, \frac {dx_0}{dc} =  - \lambda_0 \,\frac {dg_0}{dc}\,.$$
But the constraint function $g_0 = g(x_0(c))$ is {\it always}  equal to $c$, so $dg_0/dc = 1$. Thus, $df_*/dc = - \lambda _0$.
That is, the Lagrange multiplier is the {\it rate of change of the optimal value with respect to changes in the constraint}.

Of course, $f_*$ depends on $c$ through of $\lambda$, and then ${\displaystyle \frac {df_*}{dc} =  \frac {df_*}{d \lambda}\,
\frac {d\lambda}{dc} }$. We can define $c(\lambda ) $ by Cauchy problem
$$ \frac {dc}{d \lambda } = - \frac {1}{\lambda }\, \frac {df_*}{d \lambda}\,,\,\,\,\,\,\, c(\lambda _0) = 0\,. \leqno (EC)$$
Then another Lagrange dual function may be
$$  \varphi (\lambda ) = f_*( x_0(\lambda ) ) + \lambda \, c(\lambda )\,. \leqno (LDF)$$

{\bf Proposition} {\it If optimum points are critical points, both Lagrange dual functions give the same solution. Hence strong duality holds.}

{\bf Proof} Indeed, using (EC) we have
$$  \varphi ^\prime (\lambda ) =  \frac {df_*}{d \lambda} + c(\lambda ) + \lambda \, \frac {dc}{d \lambda } = c(\lambda )\, $$
and $  \varphi ^\prime (\lambda ) = 0$ implies $ c(\lambda ) =0$, that is for $\lambda _0$.

Often the Lagrange multiplier have
an interpretation as some quantity of interest:

(i) $\lambda$ is the rate of change of the quantity being optimized as a function of the constraint variable
since $\frac{\partial L}{\partial c}=\lambda$;

(ii) by the envelope theorem the optimal value of a Lagrange multiplier has an interpretation
as the marginal effect of the corresponding constraint constant upon the optimal attainable
value of the original objective function: if we denote values at the optimum with an asterisk, then it can be shown that
$$ \frac{d}{dc}\, f_*=  \frac{d}{dc} f(x(c)) = \lambda_*.$$

For details regarding classical theory of programs see \cite{B}, \cite{H}, \cite{R}.

If we have more constraints $g_\alpha (x) = c_\alpha \,, \alpha = 1, ..., m$, then the Lagrange function is $L(x, \lambda) = f(x) +
\sum_{\alpha=1}^m \lambda _\alpha (g_\alpha (x) -c_\alpha)$ and the system of critical points is
$$ \frac {\partial f}{\partial x^i} + \sum_{\alpha=1}^m \lambda_\alpha \,\frac {\partial g_\alpha}{\partial x^i} = 0\,.$$

Because the optimal coordinates $x_0$ and the optimal value $f_*$ depend on vector $c$, taking the derivatives we have
$$\frac { \partial f_*}{\partial c_\alpha} = \frac {\partial f_*}{\partial x^i_0}\, \frac {\partial x^i_0}{\partial c_\alpha} =
- \sum_{\beta=1}^m \lambda_\beta^0 \, \frac {\partial g_\beta}{\partial x^i_0}\, \frac {\partial x^i_0}{\partial c_\alpha}  =
- \sum_{\beta=1}^m \lambda_\beta^0\, \frac {\partial g_\beta}{\partial c_\alpha} = - \sum_{\beta=1}^m \lambda_\beta^0\, \delta_{\alpha\beta} = - \lambda_\alpha^0\,.$$
Then we can define $c(\lambda ) $ by the partial differential system, with initial condition, written in matrix language as
$$[\lambda_1 \,\,...\,\, \lambda_m]\,
\left[\begin{array}{ccc}\frac{\partial c_1}{\partial \lambda_1}&...&\frac{\partial c_1}{\partial \lambda_m}\\ \
...&...&...\\ \
\frac{\partial c_{m}}{\partial \lambda_1}&...&\frac{\partial c_{m}}{\partial \lambda_m}
\end{array}\right] =
- \left [ \frac {\partial f_*}{\partial \lambda_1}\,\, ...\,\, \frac {\partial f_*}{\partial \lambda_m} \right ] \,,\,\,\,\,\,\, c(\lambda _0) = 0\,. \leqno (EC)$$
Then another Lagrange dual function may be
$$  \varphi (\lambda ) = f_*( x_0(\lambda ) ) + \sum_{\alpha=1}^m \lambda_\alpha\, c_\alpha (\lambda )\,. \leqno (LDF)$$

Deriving the $\beta$-th equation with respect to $\lambda_\alpha$ and the $\alpha$-th equation with respect to $\lambda_\beta$ , in the previous
system, we obtain the complete integrability conditions ${\displaystyle \frac {\partial c_\alpha}{\partial \lambda_\beta} =
\frac {\partial c_\beta}{\partial \lambda_\alpha}}\,$ (symmetric Jacobian matrix); consequently $c(\lambda )$ is
the gradient of a scalar function, namely the Lagrange dual function
$\varphi (\lambda )$.

Moreover, the previous square matrix being a symmetrical one, we can write the equation $(EC)$ as
$$[\lambda ^1 \,\,...\,\, \lambda ^m]\,
\left[\begin{array}{ccc}\displaystyle\frac{\partial c_1}{\partial \lambda^1}&...&\displaystyle\frac{\partial c_m}{\partial \lambda^1}\\ \
...&...&...\\ \
\displaystyle\frac{\partial c_1}{\partial \lambda^m}&...&\displaystyle\frac{\partial c_{m}}{\partial \lambda^m}
\end{array}\right] =
- \left [ \frac {\partial f_*}{\partial \lambda ^1}\,\, ...\,\, \frac {\partial f_*}{\partial \lambda ^m} \right ] \,.$$

Consequently, in a regular case, we have the following situation: Solving a constrained optimum problem we obtain the optimal value as
$f_* = f(c_1, ... , c_m)$. For the dual problem we use a $f_* = f( \lambda^1, ... ,  \lambda^m)$. If the correspondence between $(c_1, ... , c_m)$ and
$( \lambda^1, ... ,  \lambda^m)$ is like a change of variables there hold the relations:
$$grad_c\, f_* = - \lambda\,; \,\,\,\,\, c ^\prime (\lambda )\,  \lambda  = - grad_\lambda \, f_* \,,\,\, c ^\prime (\lambda ) \in L(R^m,\, R^m)\,.$$

\subsection{Examples and counter-examples}

(1) Let us consider the functions $f(x,y)=x^2+y^2$ and $g(x,y)=x^2+y^2-2x$ and the problem
$$\min f(x,y) \,\,\hbox{constrained by}\,\, g(x,y)=c, \,c\geq -1.$$
The Lagrange function of this problem is
$$L(x,y,\lambda)= x^2+y^2+\lambda (x^2+y^2-2x - c).$$
The critical points of the partial function $(x,y)\to L(x,y,\lambda)$ are the solutions of the system
$$\frac{1}{2}\frac{\partial L}{\partial x}= x+\lambda x -\lambda=0,\,\frac{1}{2}\frac{\partial L}{\partial y}= y+\lambda y=0.$$
It follows $x=\frac{\lambda}{\lambda+1},\,y=0$ and hence $f_* = \left(\frac{\lambda}{\lambda +1}\right)^2$.
On the other hand, by restriction, in critical points, we have the relation
$$\left(\frac{\lambda}{\lambda +1}\right)^2 - 2 \frac{\lambda}{\lambda +1}=c.$$
It follows
$$\frac{df_*}{dc}= \frac{df_*}{d\lambda}\,\,\frac{1}{\frac{dc}{d\lambda}}$$
and finally, we obtain the geometrical interpretation $\frac{df_*}{dc}= -\lambda$.

The dual function is
$$\psi(\lambda)= L(x(\lambda),y(\lambda),\lambda)= - \frac{\lambda^2}{\lambda+1}-\lambda c.$$
The value $\psi(\lambda)$ is a minimum for $\lambda >-1$ and a maximum for $\lambda<-1$, in the initial problem.
The condition of extremum (critical point), $\psi^\prime(\lambda)=0$, is equivalent to
$(c+1)(\lambda+1)^2=1$ and the dual problem has the same solution as the primal one.

\begin{figure}
  \centering
 \includegraphics[width={10cm}]{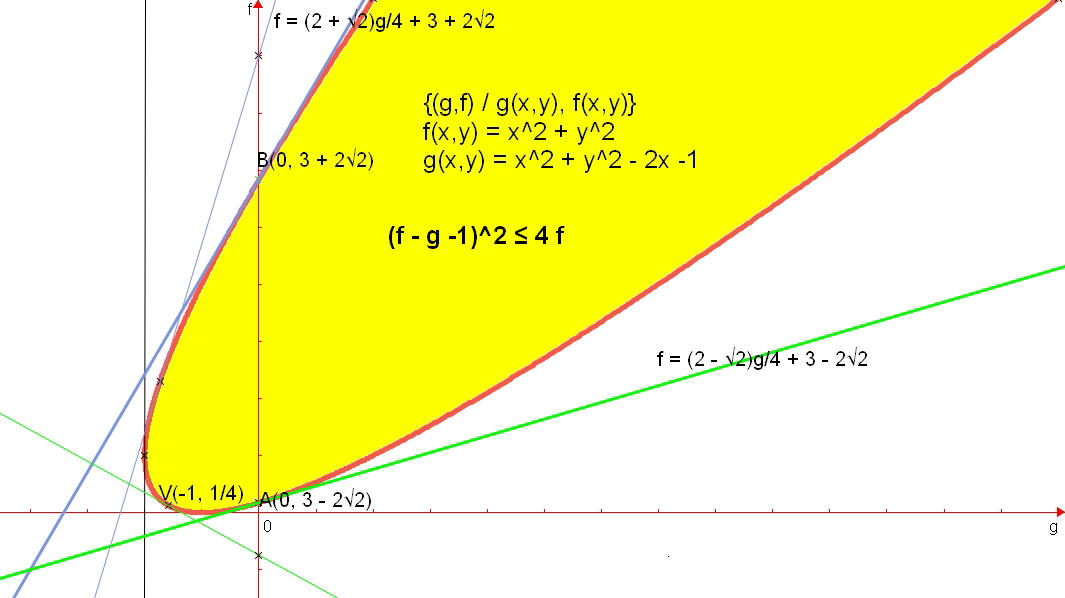}\\
  \caption{Geometry of Lagrange duality}\label{Fig.1}
\end{figure}

On the other hand the equation $(EC)$ for this problem is
$$ \frac {d\bf{c}}{d \lambda } = - \frac {1}{\lambda }\, \frac {d}{d \lambda}\,\left (\frac {\lambda}{\lambda + 1} \right )^2,\,\,\,\,\,\,
{\bf c} (\lambda _0) = 0\,,\,\, \hbox {where}\,\, \lambda _0 +1 = \pm \frac {1}{ \sqrt {c + 1}}\,.$$
We find $\displaystyle {\bf c}(\lambda ) = \frac {1}{(\lambda + 1)^2} - 1 - c $ and the dual Lagrange function
$$\varphi (\lambda ) = \left ( \frac {\lambda }{\lambda + 1} \right )^2 + \lambda \,\left (\frac {1}{(\lambda + 1)^2} - 1 - c \right ) = \psi(\lambda) $$
as the above one.

(2) {\bf A problem with two constraints} Solve the following constrained optimum problem: $$f(x,y,z) = xyz = extremum $$
{\it constrained by}
$$g_1(x,y,z)=x+y -a=0,\,\,g_2(x,y,z)=xz+yz-b=0\,.$$
The Lagrange function is
$$L(x,y,z, \lambda , \mu ) = xyz + \lambda (x+y-a) + \mu (xz+yz-b) $$
and the feasible solution of the problem is, only,
$$x=y=\frac {a}{2}= -2 \mu \,,\, z=\frac {b}{a}= \frac {\lambda}{\mu}\,,\, \lambda = - \frac {b}{4}\,,\, \mu = - \frac {a}{4}\,, $$
$$f_* = \frac {ab}{4} = 4 \lambda \mu \,.$$
The Lagrange dual function is $\psi(\lambda , \mu) = -4 \lambda \mu -a \lambda - b \mu \,.$

The partial differential system which defines $c_1(\lambda , \mu )$ and $c_2(\lambda , \mu )$ becomes, in this case,
$$\left [\lambda \,\,\,\, \mu \right ]\,
\left[\begin{array}{cc} \displaystyle {\frac{\partial c_1}{\partial \lambda}}& \displaystyle { \frac{\partial c_1}{\partial \mu}}\\
\displaystyle {\frac{\partial c_{2}}{\partial \lambda}} & \displaystyle {\frac{\partial c_{2}}{\partial \mu }}
\end{array}\right] = -  [ 4 \mu \,\,\,\,  4 \lambda ] \,,\,\,\, c_1\left (- \frac {b}{4},  - \frac {a}{4} \right ) =
c_2\left (- \frac {b}{4},  - \frac {a}{4} \right ) = 0\,. \leqno (EC)$$
Taking into account that $\displaystyle \frac{\partial c_1}{\partial \mu} = \frac{\partial c_{2}}{\partial \lambda} $,
we obtain two quasilinear PDEs
$$\lambda \frac{\partial c_1}{\partial \lambda } + \mu \frac{\partial c_1}{\partial \mu} = -4 \mu,\,\,
\lambda \frac{\partial c_2}{\partial \lambda } + \mu \frac{\partial c_2}{\partial \mu} = -4 \lambda \,, $$
with solutions, respectively
$$c_1(\lambda, \mu) = -4 \mu + \alpha \left (\frac {\lambda}{\mu} \right ), \, \, c_2(\lambda, \mu) = -4 \lambda + \beta \left (\frac {\lambda}{\mu} \right ),\,$$
$\alpha \,, \beta $ arbitrary functions.
The condition $\displaystyle \frac{\partial c_1}{\partial \mu} = \frac{\partial c_{2}}{\partial \lambda} $
is verified, for instance, if $\alpha $ and $\beta $ are constant functions.
Using the initial conditions, we find finally
$$c_1(\lambda, \mu) = -4 \mu - a \,, \, c_2(\lambda, \mu) = -4 \lambda - b\,,$$
$$\varphi (\lambda , \mu ) = 4 \lambda \mu + \lambda (-4 \mu - a) + \mu (-4 \lambda - b) = \psi(\lambda , \mu)\,. $$

The Geometry of Lagrange duality is represented in Figure 1.

(3) {\bf A strange problem} Solve the following constrained optimum problem: $$f(x,y,z) = xyz = extremum $$
{\it constrained by}
$$g_1(x,y,z)=x+y+z-a=0\,,$$ $$g_2(x,y,z)=xy+xz+yz-b=0\,.$$
So the Lagrange function is
$$L(x,y,z, \lambda , \mu ) = xyz + \lambda (x+y+z-a) + \mu (xy+xz+yz-b) $$
and one from the solutions of the problem is, for instance,
$$\mu = \frac {-a- \sqrt {a^2-3b}}{3}\,,\,\,\, \lambda = \frac {2a^2-3b+2a \sqrt {a^2-3b}}{9} = \mu ^2\,,$$
$$x=y= \frac {a+ \sqrt {a^2-3b}}{3} = - \mu \,,\,\,\,\, z= \frac {a-2 \sqrt {a^2-3b}}{3} = a + 2 \mu \,,$$
with the extremum value
$$f_*= \frac {1}{27} \left ( -2a^3 + 9ab - 2(a^2 -3b)^{3/2} \right ) = \lambda (a + 2 \mu )\,,$$
only if $a^2-3b \geq 0$.

{\bf Remark} Another solution of the problem is $$\mu = \frac {-a + \sqrt {a^2-3b}}{3}\,, ... \,\hbox {and so on} $$
with the extremum value
$$f^*= \frac {1}{27} \left ( -2a^3 + 9ab + 2(a^2 -3b)^{3/2} \right ) = \lambda (a + 2 \mu )\,.$$

The interval $[f_*\,, f^*]$ solves the following algebraic problem: {\it Find the real numbers $m$ such that the equation
$t^3 - at^2 + bt - m = 0, \,\, a, b \in R$, has tree real roots.}

It is easily to verify that
$$\frac {\partial }{\partial a} f_*(a,b) = - \lambda \,\, \hbox {and} \,\, \frac {\partial }{\partial b} f_*(a,b) = - \mu \,.$$

On the other hand, $\displaystyle \frac {\partial (\lambda , \mu )}{\partial (a, b)} = 0 $ and $f_*$ cannot be expressed as function
of $\lambda $ and $\mu $ only. Then
we have to consider $f_* = f_*(a,b, \lambda (a,b), \mu (a,b))$  and the following relations
$$ - \lambda = D_a f_* = \frac {\partial f_*}{\partial a} +  \frac {\partial f_*}{\partial \lambda}  \frac {\partial \lambda}{\partial a} +
\frac {\partial f_*}{\partial \mu}  \frac {\partial \mu}{\partial a}$$
$$ - \mu = D_b f_* = \frac {\partial f_*}{\partial b} +  \frac {\partial f_*}{\partial \lambda}  \frac {\partial \lambda}{\partial b} +
\frac {\partial f_*}{\partial \mu}  \frac {\partial \mu}{\partial b}$$
which is easily to verify also (here $D_.$ is an operator of total derivative.)

{\bf Question } Which is the dual Lagrange function $\psi (\lambda, \mu )$ in this case?

Solving the system of the critical points with respect to $x, y$ and $z$ we find, for instance, $x=y= -\mu $, $z$ undeterminate and
$\lambda = \mu ^2$. With these, one obtains the dual Lagrange function
$$\psi (\lambda, \mu )=\chi(\mu) = - \mu ^3 - a \mu ^2 - b \mu \,.$$

{\bf Remark} Although $z$ is undeterminate, the dual Lagrange function does not depend upon $z$, because with the above solutions
the derivative $\displaystyle {\frac {\partial L}{\partial z}}$ vanishes identically. The critical points condition for the dual Lagrange function
$$\frac {d \chi }{d \mu } = - (3 \mu ^2 + 2a \mu + b) = 0$$
gives us the same solutions as in primal problem.

{\bf Open problem} How it means and how we find the functions $c_1$ and $c_2$ in the situation, like this, when
$\displaystyle \frac {\partial (\lambda , \mu )}{\partial (a, b)} = 0 $ ?



(4) Let us consider the functions $f(x,y)=x^2+y^2$ and $g(x,y)=x + y$, with $(x,y)\in R^2$, and the problem
$$\min f(x,y) \,\,\hbox{constrained by}\,\, g(x,y)\geq 1.$$
The Lagrange function is
$$L(x,y,\lambda)= x^2+y^2+\lambda (1- x - y),\,\,(x,y)\in R^2,\, \lambda \geq 0.$$
The function $(x,y)\to L(x,y,\lambda)$ is convex. Consequently, it is minimal iff
$$\frac{\partial L}{\partial x}=0, \frac{\partial L}{\partial y}=0.$$
This holds if $x = \frac{\lambda}{2},\,y = \frac{\lambda}{2}$.
Substitution gives
$$\psi(\lambda)=\lambda - \frac{\lambda^2}{2},\, \lambda \geq 0.$$
The dual problem $\max\psi(\lambda)$ has the optimal point $\lambda =1$.
Consequently, $x = y = \frac{1}{2}$ is the optimal solution of the original (primal)
problem. In both cases the optimal value equals $\frac{1}{2}$, i.e.,
at optimality the duality gap is zero!

(5) Let us solve the program
$$\min x \,\,\hbox{subject to}\,\,x^2\leq 0,\, x \in R.$$
This program is not Slater regular. On the other hand, we have
$$\psi(\lambda)=\inf_{x\in R}(x+ \lambda x^2)=\left\{\begin{array}{ccc} -\frac{1}{2\lambda} & for & \lambda >0\\ \
-\infty& for & \lambda =0.\end{array}\right.$$
Obviously, $\psi(\lambda)<0$ for all $\lambda \geq 0$. Consequently,
$\sup \{\psi(\lambda)\,|\,\lambda \geq 0\} = 0$.
So the Lagrange-dual has the same optimal value as the primal problem. In spite of the lack
of Slater regularity there is no duality gap.

(6) {\bf (Example with positive duality gap)}
We consider the program
$$\min e^{-y}\, \,\,\hbox{subject to}\,\,\,\sqrt{x^2+y^2}- x \leq 0,\, (x,y) \in R^2.$$
Here the feasible region is $\Omega =\{(x,y)\in R^2\,|\, x\geq 0, y=0\}$.
Consequently this program is not Slater regular. The optimal value of the objective function is $1$.
The Lagrange function is
$$L(x,y,\lambda)= e^{-y} + \lambda(\sqrt{x^2+y^2}- x).$$
The Lagrange dual program can be written in the form
$$\sup \psi(\lambda)\, \,\,\hbox{subject to}\,\,\,\lambda \geq 0.$$
Note that $L(x, y,\lambda) > 0$ implies $\psi(\lambda)\geq 0$. Now let $\epsilon >0$. Fixing $y = - \ln \epsilon$ and
$x= \frac{y^2-\epsilon^2}{2\epsilon}$, we find $\sqrt{x^2+y^2}- x=\epsilon$
and
$$L(x,y,\lambda)= (1+\lambda)\epsilon.$$
In this way,
$$\psi(\lambda)=\inf_{(x,y)\in R^2}L(x,y,\lambda)\leq \inf_{\epsilon>0}(1+\lambda)\epsilon=0.$$
On the other hand, we also have $\psi(\lambda)\geq 0$, and consequently the optimal value of the Lagrange dual
program is $0$, and hence the minimal duality gap equals $1$! ( No strong duality here).

\subsection{The Wolfe-dual problem}

The {\it Lagrange dual program} can be written in the form
$$\sup_{\lambda \geq 0}\,\,\{\,\inf_{x \in D}\{f(x) + \sum_{\alpha=1}^m \lambda_\alpha g_\alpha(x)\}\}.$$

Assume that $D = R^n$ and the functions $f, g_1, ... , g_m$ are continuously differentiable and
convex. For a given $\lambda \geq 0$ the inner minimization problem is convex, and we can use the
fact that the infimum is attained if and only if the gradient with respect to $x$ is zero.

\begin{definition}
The problem
$$\sup_{x,\lambda}\,\,\{f(x) + \sum_{\alpha=1}^m \lambda_\alpha g_\alpha(x)\} \leqno(WP)$$
subject to
$$\frac{\partial f}{\partial x^i}(x) + \sum_{\alpha=1}^m \lambda_\alpha \frac{\partial g_\alpha}{\partial x^i}(x)=0,\,\, \lambda \geq 0$$
is called the {\it Wolfe dual} of the program (P).
\end{definition}

Obviously, the constraints in Wolfe dual are usually nonlinear. In such cases the
Wolfe-dual is not a convex program.

The Wolfe dual has the weak duality property.

\begin{theorem} {\bf (weak duality property)}
Suppose that $D= R^n$ and the functions $f, g_1, ... , g_m$ are continuously differentiable and
convex. If $\hat x$ is a feasible solution of (P) and $(\bar x, \bar \lambda)$ is a feasible solution for (WP), then
$$L(\bar x, \bar \lambda) \leq f(\hat x).$$
In other words, weak duality holds for (P) and (WP).
\end{theorem}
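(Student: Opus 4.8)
The plan is to chain two inequalities through the ``cross'' evaluation $L(\hat x, \bar\lambda)$, exploiting first the convexity of the Lagrangian in $x$ and then the sign conditions that are built into feasibility. The proof should be short and require no regularity hypothesis beyond those stated, which is exactly what one expects of a weak-duality result.

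First I would observe that, since $f$ and each $g_\alpha$ are convex and $\bar\lambda \geq 0$, the partial function $x \mapsto L(x, \bar\lambda) = f(x) + \sum_{\alpha=1}^m \bar\lambda_\alpha g_\alpha(x)$ is convex, being a nonnegative combination of convex functions. A differentiable convex function lies above each of its tangent hyperplanes, so the first-order inequality gives
$$L(\hat x, \bar\lambda) \geq L(\bar x, \bar\lambda) + \langle \nabla_x L(\bar x, \bar\lambda),\, \hat x - \bar x \rangle.$$
The decisive step is to invoke the (WP)-feasibility of $(\bar x, \bar\lambda)$, namely $\frac{\partial f}{\partial x^i}(\bar x) + \sum_{\alpha=1}^m \bar\lambda_\alpha \frac{\partial g_\alpha}{\partial x^i}(\bar x) = 0$, which says precisely that $\nabla_x L(\bar x, \bar\lambda) = 0$. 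The inner-product term therefore vanishes and I obtain $L(\bar x, \bar\lambda) \leq L(\hat x, \bar\lambda)$.

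Second I would bound $L(\hat x, \bar\lambda)$ from above. Writing it as $f(\hat x) + \sum_{\alpha=1}^m \bar\lambda_\alpha g_\alpha(\hat x)$, the (P)-feasibility of $\hat x$ gives $g_\alpha(\hat x) \leq 0$ for every $\alpha$, while $\bar\lambda \geq 0$ gives $\bar\lambda_\alpha \geq 0$; hence each product $\bar\lambda_\alpha g_\alpha(\hat x) \leq 0$ and the entire sum is nonpositive. This is exactly the inequality $(1)$ already recorded above, so $L(\hat x, \bar\lambda) \leq f(\hat x)$. Combining the two displays yields the desired chain
$$L(\bar x, \bar\lambda) \leq L(\hat x, \bar\lambda) \leq f(\hat x).$$

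I expect the only point requiring genuine care to be the convexity/first-order step: one must apply the tangent-plane inequality to the full Lagrangian $L(\cdot, \bar\lambda)$ rather than to $f$ alone, so that the stationarity constraint of (WP) can be used to annihilate the gradient term; everything else is a sign count. No further structure (Slater regularity, finiteness of the optimal value, or attainment of optima) enters, which is consistent with weak duality holding unconditionally.
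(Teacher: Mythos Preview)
Your argument is correct and is the standard proof of Wolfe weak duality: use convexity of $x\mapsto L(x,\bar\lambda)$ together with the stationarity constraint $\nabla_x L(\bar x,\bar\lambda)=0$ to get $L(\bar x,\bar\lambda)\le L(\hat x,\bar\lambda)$, then use the sign conditions $g_\alpha(\hat x)\le 0$, $\bar\lambda_\alpha\ge 0$ to get $L(\hat x,\bar\lambda)\le f(\hat x)$.

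The paper, however, does not actually supply a proof of this theorem: it states the result and moves directly to an example. So there is nothing to compare against beyond noting that your proof fills the gap cleanly. Your emphasis on applying the tangent-plane inequality to the full Lagrangian (rather than to $f$ alone) so that the (WP) stationarity condition kills the gradient term is exactly the right point to highlight.
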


\subsection{Example}

(1) Let us consider the convex program
$$\min_{x,y}\, x+ e^{y}\, \,\,\hbox{subject to}\,\,\,3x-2 e^y \geq 10,\, y\geq 0,\,\,(x,y) \in R^2.$$
Then the optimal value is $5$ with $x = 4, y=0$. The Wolfe dual of this program is
$$\sup_{x,y,\lambda}\{x+ e^{y} + \lambda_1 (10- 3x + 2 e^y)-\lambda_2 y\}$$
subject to
$$1-3\lambda_1=0,\,e^{y} + 2 e^{y}\lambda_1-\lambda_2=0,\,\,(x,y)\in R^2,\, \lambda \geq 0.$$
Obviously, the Wolfe dual program is not convex. It follows
$\lambda_1=\frac{1}{3}$ and the second constraint becomes $\frac{5}{3}e^{y}-\lambda_2=0$.
Eliminating $\lambda_1$, $\lambda_2$ from the objective function, we find
$$g(y)=\frac{5}{3}\,e^{y}- \frac{5}{3}\,y\,e^{y}+\frac{10}{3}.$$
This function has a maximum when $g^\prime(y)=0$, i.e., $y=0$ and $f(0)=5$.
Hence the optimal value of (WP) is $5$
and then $(x, y,\lambda_1,\lambda_2) = (4, 0, \frac{1}{3}, \frac{5}{3})$.

{\bf Remark} The substitution $z = e^y \geq 1$ makes the problem linear.

\section{Minimax inequality}

For any function $\phi$ of two vector variables $x\in X,\, y\in Y$,
the {\it minimax inequality}
$$\max_{y\in Y}\, \min_{x\in X}\, \phi(x,y)\leq \min_{x\in X}\, \max_{y\in Y} \,\phi(x,y)$$
is true. Indeed, start from
$$\forall x, y: \min_{x^\prime\in X}\phi(x^\prime, y)\leq \max_{y^\prime\in Y} \phi(x, y^\prime)$$
and take the minimum over $x \in X$ on the right-hand side, then the maximum over $y \in Y$
on the left-hand side.

Weak duality is a direct consequence of the minimax inequality. To see this, start from the
unconstrained formulation of Lagrange, and apply the above inequality, with $\phi = L$ the Lagrangian
of the original problem, and $y = \lambda$ the Lagrange vector multiplier.

\section{Nonholonomic Lagrange and Wolfe \\dual programs}

The Pfaff nonholonomic constraints in optimal programs were introduced by the
mathematical school coordinated by Prof. Dr. Constantin Udri\c ste at University
Politehnica of Bucharest (see \cite{DD}, \cite{DT}, \cite{RUU}-\cite{UFO}).

In this section we address the following original issues:
(i) difference between a program constrained by an integral submanifold of a Pfaff equation
and a program constrained by a Pfaff equation;
(ii) the non-holonomic Lagrange-dual problem with weak respectively strong duality;
(iii) the non-holonomic Wolfe-dual problem; (iv) pertinent examples.

Let $$\omega= \omega_i(x)dx^i= 0,\,i=1,...,n,\,\,x \in R^n,\,\, n\geq 3,$$ be a
non-completely integrable Pfaff equation (see also, \cite{D}, \cite{E}, \cite{M}, \cite{S}). The condition $n\geq 3$ is imposed by the nonholonomy theory.
For theoretical reasons, we understand that the co-vector field $\omega =(\omega_i(x))$ is $C^1$ on $R^n$, and has
no critical point in $R^n$. To this Pfaff equation we attach the $(n-1)$-hyperplane
$$H_x= \{\omega_i(x)dx^i= 0\}=\{y=(y^1,...,y^n)\in R^n\,|\, \omega_i(x)y^i=0\}$$
in the $n$-space $R^n$.

\begin{definition}
(i) A submersion $g=(g_\alpha),\,\alpha=1,...,n-p,$ is called solution of the Pfaff equation $\omega= \omega_i(x)dx^i= 0$
if $g_1(x)=0,...,g_{n-p}(x)=0$ and $dg_1(x)=0,...,dg_{n-p}(x)=0$ imply $\omega= \omega_i(x)dx^i= 0$.

(ii) A $p$-dimensional submersed submanifold $$(M,\omega, g, =)$$ of $R^n$ is called an integral manifold of the Pfaff equation on $R^n$ if
$$dg(M_x)\subseteq H_x,\,\, \hbox{for each point}\, x\, \hbox{in}\, M.$$
\end{definition}

\begin{definition}
The Pfaff equation is said to be {\it completely integrable} if there is one and only one integral
manifold of maximum possible dimension $n-1$ through each point of $R^n$.
\end{definition}

\begin{theorem} {\bf (Frobenius theorem)} A necessary and sufficient condition for
the Pfaff equation $\omega=0$ to be completely integrable is $\omega \wedge d\omega=0$.
\end{theorem}
Here $d\omega$ is the differential form of degree $2$ obtained from $\omega$ by exterior differentiation,
and $\wedge$ is the exterior product.

\subsection{Integral curves}

Let $(\Gamma,g=(g_1,...,g_{n-1}), =)$ be an integral curve of the non-completely
integrable Pfaff equation $\omega=\omega_i(x)dx^i=0$ can be written in the Cartesian implicit form
$$\Gamma: g_1(x)=0,..., g_{n-1}(x)=0,$$
i.e., the equations
$g_1(x)=0,..., g_{n-1}(x)=0$ and $dg_1(x)=0,..., dg_{n-1}=0$ imply
$\omega_i(x)dx^i=0$, for any $dx$. This means that
$$\det\left(\begin{array}{ccc}\frac{\partial g_1}{\partial x^1}&...&\frac{\partial g_1}{\partial x^n}\\ \
...&...&...\\ \
\frac{\partial g_{n-1}}{\partial x^1}&...&\frac{\partial g_{n-1}}{\partial x^n}\\ \
\omega_1&...&\omega_n \end{array}\right) =0.$$
It follows that there exist the functions $\nu_1(x),...,\nu_{n-1}(x)$, and the constant $\mu$, such that
$$\sum_{\alpha=1}^{n-1}\nu_\alpha(x) \frac{\partial g_\alpha}{\partial x^i}(x)= \mu \omega_i(x),\,\, i=1,...,n,$$
for any $x$.

\subsection{The connection between critical points on \\an integral submanifold and critical points \\with Pfaff non-holonomic constraint}

Let $f:R^n\to R$ be a $C^2$ function. Let us consider a non-holonomic program:
$$\min f(x)\,\, \hbox{subject to}\,\, \omega = \omega_i(x)dx^i=0,\,\, i=1,...,n.\leqno(NP)$$

Suppose that a $p$-dimensional integral submanifold of the Pfaff equation $\omega = \omega_i(x)dx^i=0$ is
$(M,\omega, g=(g_1,...,g_{n-p}), =)$. We consider the attached program
$$\min_x f(x)\,\,\hbox{subject to}\,\, x \in M$$
or, equivalently,
$$\min_x f(x)\,\,\hbox{subject to}\,\, g_1(x)=0,...,g_{n-p}(x)=0.$$

\begin{theorem}
 A point in $R^n$ is a nonholonomic constrained critical point if and only if it is
 critical point constrained by each integral submanifold containing it.
\end{theorem}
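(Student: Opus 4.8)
The plan is to recast both notions of criticality as orthogonality conditions on $\nabla f(x_0)$ — one relative to the admissible hyperplane $H_{x_0}$, the other relative to the tangent space $T_{x_0}M$ of an integral submanifold — and then to exploit the defining inclusion $T_{x_0}M\subseteq H_{x_0}$ in one direction and the existence of integral curves in every admissible direction in the other. First I would record the two first-order conditions. A point $x_0$ is a nonholonomic constrained critical point of $(NP)$ exactly when the feasible first-order variations annihilate the objective, i.e. $\nabla f(x_0)\cdot y=0$ for every $y\in H_{x_0}$; since $H_{x_0}=\{y\,|\,\omega_i(x_0)y^i=0\}$ is the kernel of the nonzero covector $\omega(x_0)$, this is equivalent to the existence of a scalar $\mu$ with $\nabla f(x_0)=\mu\,\omega(x_0)$. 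On the other hand, for an integral submanifold $M$ given by $g_1(x)=0,\dots,g_{n-p}(x)=0$, the point $x_0$ is a critical point of $f|_M$ exactly when $\nabla f(x_0)$ annihilates $T_{x_0}M=\bigcap_\alpha\ker dg_\alpha(x_0)$; equivalently, because $g$ is a submersion and the $dg_\alpha(x_0)$ are independent, when $\nabla f(x_0)=\sum_\alpha\lambda_\alpha\,dg_\alpha(x_0)$ for some multipliers $\lambda_\alpha$.

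The forward implication is then immediate. If $x_0$ is a nonholonomic critical point, then $\nabla f(x_0)\perp H_{x_0}$. For any integral submanifold $M$ through $x_0$ the defining condition gives $T_{x_0}M\subseteq H_{x_0}$, hence $\nabla f(x_0)\perp T_{x_0}M$, which is exactly the Lagrange criticality of $x_0$ on $M$. Concretely, since $T_{x_0}M\subseteq H_{x_0}$ forces $\omega(x_0)$ to annihilate $T_{x_0}M$, there are scalars $\nu_\alpha$ with $\omega(x_0)=\sum_\alpha\nu_\alpha\,dg_\alpha(x_0)$, so that $\nabla f(x_0)=\mu\,\omega(x_0)=\sum_\alpha(\mu\nu_\alpha)\,dg_\alpha(x_0)$ exhibits the required multipliers.

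The converse carries the real content, and the main obstacle is to show that criticality on every integral submanifold forces $\nabla f(x_0)\perp H_{x_0}$; equivalently, that the tangent spaces of the integral submanifolds through $x_0$ exhaust all of $H_{x_0}$. The plan is to use one-dimensional integral manifolds (integral curves). Since $\omega$ is $C^1$ and has no critical point, $H$ is a $C^1$ distribution of rank $n-1$; after relabeling so that $\omega_n(x_0)\neq 0$, the fields $E_j=\frac{\partial}{\partial x^j}-\frac{\omega_j}{\omega_n}\frac{\partial}{\partial x^n}$, $j=1,\dots,n-1$, form a local $C^1$ frame of $H$, since $\omega(E_j)=\omega_j-\frac{\omega_j}{\omega_n}\omega_n=0$. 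Given an arbitrary direction $v\in H_{x_0}$, write $v=\sum_j a^j E_j(x_0)$, set $X=\sum_j a^j E_j$, and let $\Gamma$ be the integral curve of $X$ through $x_0$; then $\Gamma'(t)=X(\Gamma(t))\in H_{\Gamma(t)}$ for all $t$, so $\Gamma$ is an integral curve of the Pfaff equation with $\Gamma'(0)=v$. By hypothesis $x_0$ is critical for $f|_\Gamma$, so $\left.\frac{d}{dt}f(\Gamma(t))\right|_{t=0}=\nabla f(x_0)\cdot v=0$.

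As $v\in H_{x_0}$ was arbitrary, we conclude $\nabla f(x_0)\perp H_{x_0}$, whence $\nabla f(x_0)=\mu\,\omega(x_0)$ and $x_0$ is a nonholonomic critical point. The delicate points I expect to check carefully are the construction of an admissible vector field realizing a prescribed tangent direction — handled by the local frame above, which is legitimate precisely because $\omega(x_0)\neq 0$ — and the fact that these integral curves genuinely qualify as integral submanifolds in the sense of the definition, which they do in the case $p=1$. Everything else reduces to the elementary linear-algebraic identity that a covector annihilates $T_{x_0}M$ if and only if it lies in the span of the $dg_\alpha(x_0)$.
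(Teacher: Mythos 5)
Your proposal is correct, and its skeleton coincides with the paper's: both directions rest on the linear-algebraic fact that a covector annihilating $\ker\omega(x_0)$ (respectively $\bigcap_\alpha\ker dg_\alpha(x_0)$) must be a multiple of $\omega(x_0)$ (respectively a combination of the $dg_\alpha(x_0)$), together with the inclusion $T_{x_0}M\subseteq H_{x_0}$. Your forward implication is exactly the paper's \emph{only if} part: $\omega(x_0)$ annihilates $T_{x_0}M$, hence $\omega(x_0)=\sum_\alpha\nu_\alpha dg_\alpha(x_0)$, and $df=-\mu\,\omega$ transfers the multipliers to the holonomic problem. The real difference is in the converse (the paper's \emph{if} part), which carries the substance of the theorem. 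There the paper argues in one sentence: since the integral submanifold is ``arbitrary,'' the tangent vector $X$ is an arbitrary solution of $\omega(X)=0$, and $df(X)=0$ for all such $X$ forces $df+\mu\,\omega=0$. This silently assumes the key geometric fact that every direction $v\in H_{x_0}$ is actually realized as a tangent vector of some integral submanifold through $x_0$ --- which is not automatic, since $\omega$ is non-integrable and no $(n-1)$-dimensional integral manifolds exist. Your construction supplies precisely this missing step: the local frame $E_j=\partial/\partial x^j-(\omega_j/\omega_n)\,\partial/\partial x^n$ of the distribution (legitimate near $x_0$ because $\omega(x_0)\neq 0$) and the integral curve of $X=\sum_j a^jE_j$ through $x_0$ yield, for each $v\in H_{x_0}$, a one-dimensional integral manifold tangent to $v$. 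As a by-product, your argument shows that criticality along every integral \emph{curve} already suffices --- a formally stronger statement, and one consistent with the paper's own treatment of integral curves in Section 3.1. In short: same strategy, but your version closes the gap that the paper's proof glosses over.
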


{\bf Proof} ({\bf if}) For a given integral submanifold, the associated Lagrange function is
$$L(x,\lambda)= f(x)+\lambda_1 g_1(x)+...+ \lambda_{n-p} g_{n-p}(x).$$
The critical point conditions are
$$df(x)+ \lambda_1 dg_1(x)+...+ \lambda_{n-p} dg_{n-p}(x)=0,\,\,\forall dx; g_1(x)=0,...,g_{n-p}(x)=0.$$

Let $X$ be a vector field tangent to the integral submanifold (and hence from distribution), i.e.,
$$dg_1(X)=0,...,dg_{n-p}(X)=0,\,\omega(X)=0.$$
It follows that at a critical point we must have $df(X)=0$.

If the integral submanifold is arbitrary (both as dimension and as way of description), i.e., $g$ is arbitrary, i.e., $X$ is an arbitrary
vector field tangent to $M$, then from the relations
$\omega(X)=0, df(X)=0, \forall X$, we obtain the existence of a constant $\mu$ such that,
at a critical point, which is independent on $g$, we must have
$$df(x)+ \mu \omega(x)=0,\,\,\forall dx.\leqno(NCP)$$

({\bf only if}) Each nonholonomic constrained critical point belongs to each critical point set
associated to a constrained integral submanifold. Indeed, if $X$ is a vector field tangent to an integral submanifold,
then
 $$dg_1(X)=0,...,dg_{n-p}(X)=0,\,\omega(X)=0.$$
 It follows the existence of multipliers $\lambda_1,...,\lambda_{n-p}$ such that
 $$\omega = \lambda_1 dg_1(x)+...+ \lambda_{n-p} dg_{n-p}(x).$$
 But $df= -\mu \omega$.

\subsubsection{Attached Riemannian geometry}
Let us consider the system "NCP = nonholonomic critical point", where the parameter $\mu$ is arbitrary.
According to the implicit function theorem, if the matrix
$$\left(a_{ij}\right)=\left(\frac{\partial^2 f}{\partial x^i \partial x^j}+\mu \frac{\partial \omega_i}{\partial x^j}\right)$$
is non-degenerate at a fixed critical point, then the system define a curve $x=x(\mu)$. On the other hand, the matrix of elements
$g_{jk}=\delta^{il} a_{ij}a_{kl}$ is a Riemannian metric. Symbolically, $a= (a_{ij})$, $g= {}^ta\,a$, $g^{-1} = a^{-1}\,{}^t(a^{-1})$
 and the geometry induced by $g$ follows by usual rules.

By differentiation with respect to $\mu$, we obtain
$$a_{ij}\frac{dx^j}{d\mu} +\omega_i=0.$$
Let $\eta_k= \delta^{il} a_{kl}\omega_i$. Then $g_{kj}\frac{dx^j}{d\mu}+ \eta_k=0$
and hence
$$g_{kj}\frac{dx^k}{d\mu}\frac{dx^j}{d\mu}+ \eta_k \frac{dx^k}{d\mu}=0$$

{\bf Proposition} {\it The angle between the vectors $\eta_k$ and $\frac{dx^j}{d\mu}$ is always obtuse}.

{\bf Completion} It follows
$$\left(\frac{\partial^2 f}{\partial x^i \partial x^j}+\frac{\mu}{2} \left(\frac{\partial \omega_i}{\partial x^j}+ \frac{\partial \omega_j}{\partial x^i}\right)\right)\frac{dx^i}{d\mu} \frac{dx^j}{d\mu} +\omega_i \frac{dx^i}{d\mu}=0.$$
We reinterpret this equality, introducing the {\it fundamental tensor}
$$h_{ij}=\frac{\partial^2 f}{\partial x^i \partial x^j}+\frac{\mu}{2} \left(\frac{\partial \omega_i}{\partial x^j}+ \frac{\partial \omega_j}{\partial x^i}\right),$$
and writing
$$h_{ij}\frac{dx^i}{d\mu} \frac{dx^j}{d\mu} +\omega_i \frac{dx^i}{d\mu}=0.$$

\subsection{Eliminating ambiguities by \\geometric interpretation}

The Pfaff equations theory is a source of misunderstanding for beginners.
We can eliminate such problems thinking in terms of differential geometry.

\subsubsection{Language of Vranceanu}

Let $\gamma_{x_0}$ be the image of an integral curve of the
Pfaff equation $\omega_i(x)dx^i= 0$ through the point $x_0$. Denote by $\Sigma_{x_0}=\{\gamma_{x_0}\}$
the family of all images of integral curves through the point $x_0$. The pair
$$(D,\Sigma),\,\, \Sigma=\{\Sigma_{x_0}\,|\, x_0\in D\}$$
is called {\it nonholonomic hypersurface} on $D$ attached to the Pfaff equation $\omega_i(x)dx^i= 0$ (see also, \cite{Vr}).

To the Pfaff equation $\omega_i(x)dx^i= 0$ and to the point $x_0\in D$, we attach the unique $(n-1)$-hyperplane
$$H_{x_0}= \{x \in R^n\,|\, \,\omega_i(x_0)(x^i - x_0^i)=0\}.$$
Since all straight lines tangent to integral curves which pass through $x_0$ are included in $H_{x_0}$,
the hyperplane $H_{x_0}$ is called the {\it tangent hyperplane} at $x_0$ of $(D,\Sigma)$.


\subsubsection{Language of distributions}

Let us introduce the $(n-1)$-hyperplane
$$H_x= \{\omega_i(x)dx^i= 0\}=\{y=(y^1,...,y^n)\in R^n\,|\, \omega_i(x)y^i=0\}$$
in the $n$-space $R^n$. The rule $x\to H_x$ gives a field $H$ of hyperplanes in $R^n$,
or what we call {\it $(n-1)$-dimensional distribution: a linear subbundle of the tangent bundle}.
In short $H=\cup_{x\in R^n}H_x\subset TR^n$.
In subsequent explanations, we shall prefer the distributions language being more suggestive.

Similarly, we can introduce the half-hyperplane
$$H_x^-= \{\omega_i(x)dx^i\leq 0\}=\{y=(y^1,...,y^n)\in R^n\,|\, \omega_i(x)y^i\leq 0\}.$$
The rule $x\to H^-_x$ gives a field $H^-$ called {\it $(-)$-distribution}.

\begin{definition}
A $p$-dimensional submersed submanifold $(M,H, g, =)$ of $R^n$ is called an integral manifold of the distribution $H$
on $R^n$ if
$$dg(M_x)\subseteq H(g(x))=H_x,\,\, \hbox{for each point}\, x\, \hbox{in}\, M.$$
\end{definition}

The field $H^-$ is used when the program refers to {\it manifolds whose boundary contains an integral manifold of a Pfaff equation}.
A manifold with boundary is a manifold with an edge. The boundary of a $(p+1)$-manifold with boundary is a $p$-manifold.
In technical language, a manifold with boundary is a space containing both interior points and boundary points.

\subsection{The Lagrange dual problem}

Let $(M,\omega, g=(g_1,...,g_{n-p}), =)$ be a $p$-dimensional integral submanifold
of the Pfaff equation $\omega = \omega_i(x)dx^i=0$ and $f:R^n\to R$ be a $C^2$ function.
Denote $g=(g_\alpha),\,\alpha=1,...,n-p,$ and we introduce the set
$$\Omega = \{x\in R^n \,|\,g_\alpha(x)= 0,\,\alpha=1,...,n-p\}= \{x\in R^n \,|\,g(x)= 0\}.$$
For each program
$$\min_x f(x)\,\,\hbox{subject to}\,\, x \in M,$$
equivalently
$$\min_x\{f(x)\,|\, x \in \Omega\},$$
we can repeat the theory of dual programs.
The {\it Lagrange function (or Lagrangian)} of this program is
$$L(x, \lambda) = f(x)+ \sum_{\alpha=1}^{n-p} \lambda_\alpha g_\alpha(x)
= f(x)+<\lambda,g>, x\in R^n, \lambda\in R^{n-p}.$$
The critical points with respect to the variable $x$ are given by the system
$$df(x)+ <\lambda,dg(x)>=0, \forall dx.\leqno(1)$$
It follows $x=x(\lambda)$, the dual function $\psi(\lambda)= L(x(\lambda), \lambda)$ and the {\it Lagrange dual problem}
$$\max_\lambda \psi(\lambda).$$

But, what we understand by the dual theory
for the nonholonomic program (NP)? Of course, we must ask an arbitrary integral submanifold.
That is why, the system (1) must be replace with the system
$$df(x)+ \mu \omega(x) =0, \forall dx.$$
The solution of this system is of the form $x=x(\mu)$, an arbitrary dual function is
$\psi(\mu,\lambda)= L(x(\mu), \lambda)$ and the {\it Lagrange dual problem} can be written
$$\max_{\mu,\,\lambda} \psi(\mu,\lambda).$$

Each Lagrange function $L(x, \lambda) $ of (NP)
is linear in $\lambda$.

If the distribution $H$ is described by $q$ Pfaff equations, then $\lambda$ has $(n-p)q$ components and
$\mu$ has $q$ components.

\subsubsection{Dual nonholonomic Lagrange function}

In the non-holonomic context, the constraint function $g$ does not exists, but it should be built
at least on the critical point set $\{x(\mu)\}$.

By analogy with the holonomic equality $g( \bar x(\mu )) = c$,
from the relation ${\displaystyle \frac {df_*}{d \mu} \left ( \frac {dc}{d \mu} \right )^{-1} = - \mu} $,
 we can define $c(\mu ) $ by Cauchy problem
$$ \frac {dc}{d \mu } = - \frac {1}{\mu }\, \frac {df_*}{d \mu}\,,\,\,\,\,\,\, c(\mu_0) = 0\,, \leqno (EC)$$
and then a Lagrange dual function will be
$$  \theta (\mu ) = f_*(\bar x(\mu ) ) + \mu \, c(\mu )\,. \leqno (LDF)$$
This function has the derivative $\theta^\prime (\mu )= c(\mu )$.

{\bf Example 1.} Let the objective function be $f(x,y,z) = x^2 + y^2 + z^2$ and the constraint Pfaff form $\omega = x dy + dz = 0$.
Then the critical points condition $df + \mu \omega = 0$ gives us
$$ \bar x(\mu ) =0\,,\,\, \bar y(\mu ) = 0\,,\,\, \bar z(\mu ) = - \frac {\mu }{2}\,,\,\, f_*(\bar x,\bar y, \bar z) = \frac {\mu^2}{4}\,.$$
If, for instance, we take $\mu_0 = 2$, the solution for the {\it primal problem} will be $\bar x=0, \bar y =0, \bar z = -1$ and $ f_*= 1$.

For the {\it Lagrange dual problem} the equation (EC) gives us $$\frac {dc}{d \mu } = - \frac {1}{\mu }\, \frac { \mu}{2} = - \frac {1}{2}\,.$$
Consequently ${\displaystyle c(\mu) = - \frac {\mu}{2} + \alpha }$ . If, as instance, $c_0 = 0$ for $\mu_0 = 2$, then $\alpha = 1$ and the
Lagrange dual function is $$\theta (\mu ) = \frac {\mu^2}{4} + \mu \left ( - \frac {\mu}{2} + 1 \right ) = - \frac {\mu^2}{4} + \mu\,.$$
It follows ${\displaystyle \theta ^\prime (\mu ) =  - \frac {\mu}{2} + 1} = 0$. Hence $\mu_0 = 2$ and we obtain the same solution as in primal
problem.

{\bf Example 2.} Let consider the function $f(x,y,z)=x^2+y^2-z$ and the Pfaff form $\omega =x dy - z dz$. Solve the {\it primal problem}
$$f(x,y,z)= extremum\,\,\, \hbox {with constraint}\,\,\, \omega = 0.$$
The differential Lagrange form of the  problem is
$$dL = 2x dx + 2y dy - dz + \mu (x dy - z dz)\,. $$
The condition $dL = 0$, (as differential form) leads to the equations $2x = 0,\, 2y + \mu x = 0,\, -1 - \mu z = 0$, whose solutions,
the critical points, are $x = 0,\, y = 0,\, z = -1/ \mu \, $. For $\mu < 0$ the critical points are points of constrained minimum and
the corresponding minimum values are $f_* = 1/ \mu \,.$ For $\mu \geq 0$ the critical points are not constrained extremum points.

For construct the {\it dual problem}, let us use the above described method. From equation (EC) we have $dc/ d\mu = 1/ \mu ^3$. Then
$\displaystyle c(\mu ) = - \frac {1}{2} \left ( \frac {1}{\mu ^2} - \frac {1}{\mu ^2_0} \right )$ and the Lagrange dual function will be
$$\theta (\mu) = \frac {1}{\mu } - \frac {\mu }{2} \left ( \frac {1}{\mu ^2} - \frac {1}{\mu ^2_0} \right )\,.$$
The critical points are given by the equation
$$\theta ^\prime (\mu) =  - \frac {1}{2} \left ( \frac {1}{\mu ^2} - \frac {1}{\mu ^2_0} \right ) = c(\mu ) = 0$$
and a solution is $\mu _0$, i.e. the strong duality holds.

{\it Another way } Remind that a point $(x_0,y_0,z_0)$ is a minimum (maximum) point for the function $f(x,y,z)$,
constrained by the Pfaff equation $\omega = 0$, if this point is a minimum (maximum) point for $f$ restricted at any line solution of $\omega = 0$,
passing through $(x_0,y_0,z_0)\,$. So we reformulate the {\it primal problem} for a suitable line passing through critical points, in our case $(0,0,1/c)\,$.
Such a line has the cartesian implicit equations
$$\left \{ \begin{array}{c} y^2 - z - 1/c = 0 \\ 2yz - x = 0\,. \end{array} \right .$$
Then the Lagrange function of the  problem is
$$L(x,y,z, \lambda , \mu ) = x^2+y^2-z + \lambda (y^2 - z - 1/c) + \mu (2yz - x)\,. $$
We obtain the following system of the critical points:
$$\frac {\partial L}{\partial x}=2x- \mu =0 $$
$$\frac {\partial L}{\partial y}=2y+2\lambda y + 2\mu z =0 $$
$$\frac {\partial L}{\partial z}=-1- \lambda + 2 \mu y=0\,. $$
This system has the solution $(0,0,1/c)\,$ for $\mu = 0 $ and $\lambda = 1\,.$

\subsection{Case of nonholonomic inequalities}

Let $(R^n,\omega, g=(g_1,...,g_{n-p}),\leq, \preceq)$ be a subset of $R^n$,
attached to the Pfaff inequation $\omega = \omega_i(x)dx^i\leq 0$, whose boundary
contains the $p$-dimensional integral submanifold $(M,\omega, g=(g_1,...,g_{n-p}),=, =)$ of the Pfaff equation.

Let $f:R^n\to R$ be a $C^2$ function.
Denote $g=(g_\alpha),\,\alpha=1,...,n-p,$ and we introduce the set
$$\Omega = \{x\in R^n \,|\,g_\alpha(x)\leq 0,\,\alpha=1,...,n-p\}= \{x\in R^n \,|\,g(x)\preceq 0\}.$$

\begin{theorem} ({\bf weak duality}) The dual function yields lower bounds of the initial optimal value $f_*$, i.e., for any
$\lambda$, we have $\varphi(\lambda)\leq f_*$. In other words,
$$\sup_\lambda\,\{\varphi(\lambda)\,|\, \lambda \succeq 0\}\leq \min_{x\in \Omega}\,\,\{f(x)+ <\lambda,g(x)>,\,\, x\in \Omega,\,\, \lambda\succeq 0\}.$$
\end{theorem}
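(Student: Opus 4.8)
The plan is to reproduce, in the nonholonomic inequality setting, the monotonicity-of-infimum argument used for the holonomic weak duality (Theorem 1), the only genuinely new ingredient being the correct identification of the feasible set over which the infimum is restricted.

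First I would establish the pointwise inequality that plays the role of relation (1). Fix any $x \in \Omega$ and any $\lambda \succeq 0$. By the definition of $\Omega$ we have $g_\alpha(x) \leq 0$ for every $\alpha = 1, \ldots, n-p$, and since each component $\lambda_\alpha \geq 0$, the inner product obeys $<\lambda, g(x)> = \sum_{\alpha=1}^{n-p} \lambda_\alpha g_\alpha(x) \leq 0$. Consequently
$$L(x, \lambda) = f(x) + <\lambda, g(x)> \leq f(x), \quad \forall x \in \Omega, \ \forall \lambda \succeq 0,$$
with equality precisely when the complementarity conditions $\lambda_\alpha g_\alpha(x) = 0$ are met.

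Next I would pass to infima. Taking the dual function to be $\varphi(\lambda) = \inf_{x \in R^n} L(x, \lambda)$, with the infimum running over the ambient space that contains the feasible set $\Omega$, monotonicity of the infimum under enlargement of the domain together with the pointwise bound of the first step yields, for each $\lambda \succeq 0$,
$$\varphi(\lambda) = \inf_{x \in R^n} L(x, \lambda) \leq \min_{x \in \Omega} L(x, \lambda) \leq \min_{x \in \Omega} f(x) = f_*.$$
Hence $\varphi(\lambda) \leq f_*$ for every $\lambda \succeq 0$, and taking the supremum over $\lambda \succeq 0$ gives $\sup_\lambda \{\varphi(\lambda) \mid \lambda \succeq 0\} \leq f_*$, which is the assertion.

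The step I expect to demand the most care is the identification of the correct feasible set and dual domain. In the nonholonomic framework the constraint map $g$ is, in principle, only available along the critical-point set and the integral submanifold, so one must justify that the holonomic set $\Omega = \{x \mid g(x) \preceq 0\}$ is the legitimate object to restrict to and that it genuinely sits inside the domain $R^n$ over which $\varphi$ is defined; this is exactly where the hypothesis that the boundary of the nonholonomic region contains the integral submanifold $M$ enters. Once the containment $\Omega \subset R^n$ and the sign condition $<\lambda, g(x)> \leq 0$ on $\Omega$ are secured, the remainder is the same elementary chain of inequalities as in the holonomic case, requiring neither convexity nor any regularity assumption beyond $\lambda \succeq 0$ and $g(x) \preceq 0$ on $\Omega$.
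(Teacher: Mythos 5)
Your proposal is correct and takes essentially the same route the paper intends: the paper states this nonholonomic weak duality theorem without a separate proof, implicitly reusing the argument it gave for the holonomic case (Theorem 1), namely the pointwise bound $L(x,\lambda)\leq f(x)$ on $\Omega$ followed by monotonicity of the infimum under the inclusion $\Omega\subset R^n$ and taking the supremum over $\lambda\succeq 0$. Your chain $\varphi(\lambda)\leq \min_{x\in\Omega}L(x,\lambda)\leq \min_{x\in\Omega}f(x)=f_*$ is exactly that argument, and it even covers the intermediate quantity appearing on the right-hand side of the theorem as stated.
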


\begin{theorem} {\bf (strong duality)} If the program (P) satisfies the Slater condition and has finite optimal
value, then
$$\sup_\lambda\,\{\varphi(\lambda)\,|\, \lambda \succeq 0\}= \min_{x\in \Omega}\{f(x)\}.$$
Moreover, then the dual optimal value is attained.
\end{theorem}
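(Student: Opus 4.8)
The plan is to mirror the proof of the holonomic strong duality theorem, reducing the nonholonomic inequality problem to a classical convex program once the integral submanifold — and hence the submersion $g=(g_\alpha)$ describing the boundary — is fixed. With that choice the feasible set $\Omega=\{x\in R^n\mid g(x)\preceq 0\}$ has exactly the form treated in Section~1, the Lagrangian is $L(x,\lambda)=f(x)+\langle\lambda,g(x)\rangle$, and, consistently with the preceding weak duality statement, the dual function is $\varphi(\lambda)=\inf_{x\in R^n}L(x,\lambda)$.

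First I would record the easy inequality supplied by the preceding weak duality theorem: for every $\lambda\succeq 0$ one has $\varphi(\lambda)\le f_*$, whence $\sup_{\lambda\succeq 0}\varphi(\lambda)\le f_*=\min_{x\in\Omega}f(x)$. Second, for the reverse inequality I would apply the Convex Farkas Lemma with $a=f_*$, exactly as in the holonomic case. The Slater condition together with finiteness of $f_*$ guarantees the existence of a multiplier $\lambda_*\succeq 0$ with
$$L(x,\lambda_*)=f(x)+\langle\lambda_*,g(x)\rangle\ge f_*,\quad \text{for all } x\in R^n.$$
Taking the infimum over $x$ gives $\varphi(\lambda_*)\ge f_*$, hence $\sup_{\lambda\succeq 0}\varphi(\lambda)\ge f_*$. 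Combined with the first step this forces $\varphi(\lambda_*)=\sup_{\lambda\succeq 0}\varphi(\lambda)=f_*$; in particular the dual optimum is attained at $\lambda_*$, which is precisely the assertion of the theorem.

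The main obstacle is not the chain of inequalities but verifying that the Convex Farkas Lemma is genuinely available in the nonholonomic setting. In the holonomic case $f$ and the $g_\alpha$ are convex by assumption; here the $g_\alpha$ are manufactured from the integral submanifold of the Pfaff equation, so their convexity is not automatic. The delicate point is therefore to impose, or to establish on the relevant critical-point set $\{x(\mu)\}$, the convexity hypotheses under which the Farkas alternative holds, and to check that the Slater condition — existence of $\hat x$ with $g(\hat x)\prec 0$ — remains meaningful given that the boundary of the region $\{\omega\le 0\}$ associated with the $(-)$-distribution contains the integral manifold $\{g=0\}$. Once these structural hypotheses are secured, the argument coincides with that of the holonomic strong duality theorem.
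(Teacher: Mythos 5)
Your proposal is correct and follows exactly the route the paper intends: the paper states this nonholonomic strong duality theorem without giving any separate proof, implicitly relying on its Section 1 argument (Convex Farkas Lemma applied with $a=f_*$, combined with weak duality, yielding a multiplier $\lambda_*$ with $\varphi(\lambda_*)=f_*$), which is precisely what you reproduce. Your closing caveat --- that the $g_\alpha$ arising from an integral submanifold of the Pfaff equation need not be convex, so the Convex Farkas Lemma is not automatically available --- identifies a genuine hypothesis the paper silently glosses over, and is a point in your favor rather than a defect of the proposal.
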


\subsection{Nonholonomic Wolfe dual}
The problem
$$\max_{x,\,\mu}\,\,\{f(x)\}$$
subject to
$$\frac{\partial f}{\partial x^i}(x) + \mu\, \omega_i(x)=0,\,\, \mu \geq 0$$
is called the {\it nonholonomic Wolfe dual} (WDNP) of the nonholonomic program (NP).

It follows $x=x(\mu)$ and $f(x(\mu))$. That is why, solving the dual problem
is equivalent to find extrema of the function $\mu \to f(x(\mu))$.

\subsection{Examples}

(1) see \cite{UFO}, p.190-191 {(\bf Consumer theory with nonholonomic constraint)}
Suppose that
$$u(x)=x_1^{\alpha_1}x_2^{\alpha_2}\cdots x_n^{\alpha_n},\,\,\alpha_i\geq 0,\,\,\sum_{i=1}^n\, \alpha_i<1,\,\, x=(x_1,...,x_n) \in R_+^n$$
is the utility function defined over $n$ goods. Denote by $p_i(x)>0,\, i=1,...,n,$ the prices of the goods.

Let us determine what is the proportion of income that the associated consumer will spend on each good, if the
budget constraint is the Pfaff inequality $\sum_{i=1}^n\, p_i(x)dx_i\geq 0$.

The utility function is concave. We must look for critical points of the
utility function $u$ subject to the given nonholonomic constraint. To determine the
constrained critical points, we use the Lagrange $1$-form
$$\eta = du(x) - \mu \sum_{i=1}^n\, p_i(x)dx_i= \sum_{i=1}^n\left(\frac{\alpha_i}{x_i}\,u(x) - \mu p_i(x)\right)dx_i,$$
and we write the system
$$\alpha_i u(x)- \mu p_i(x)x_i=0,\,i=1,...,n.$$

Suppose we have a critical point (solution) $x^* = x^*(\mu),\, \mu>0$.
Each component $X_i^*$ of the critical point is the quantity consumed of the $i^{th}$ good.
Moreover, $p_i(x^*)x_i^*$ is the income spend on the $i^{th}$ good and
$\sum_{i=1}^n p_i(x^*)x_i^*$ is the total income. Since
$$u(x)\sum_{i=1}^n \alpha_i - \mu \sum_{i=1}^n p_i(x^*)x_i^*=0,$$
we find the proportion of the income spent on the $i^{th}$ good,
$$\frac{p_i(x^*)x_i^*}{\sum_{i=1}^n p_i(x^*)x_i^*}=\frac{\alpha_i}{\sum_{i=1}^n\alpha_i},$$
which is independent on consumed quantities and of prices ({\it economic law}).

Suppose we are interested in the maximum of the utility function $u$ subject to the nonholonomic constraint.
To solve this problem, as usual we look for a critical point $x^* = x^*(\mu),\, \mu>0$,
which verify the equality in the budget constraint (hyperplane) $\sum_{i=1}^n p_i(x^*(\mu))dx_i=0$ and
the negative definiteness of the restriction of the quadratic form
$$d^2u(x^*) - \frac{\mu}{2}\,\,\sum_{i,j=1}^n \left(\frac{\partial p_i}{\partial x_j}+\frac{\partial p_j}{\partial x_i}\right)(x^*)\,dx_i\,dx_j$$
to the budget hyperplane.

(2) {\bf Nonholonomic initial program} Find extremum points of the function $f(x,y,z)=2xy + z^2$ subject to $zdx-dy \geq 0,\, xdy+dz \geq 0$\,\, (see \cite{UD6}).
The constrained critical points are solutions of the system
$$2y-\mu_1 z=0,\,\, 2x + \mu_1 -\mu_2 x =0,\,\, 2z-\mu_2=0,\,\,\mu_1 \leq  0,\,\,\mu_2\leq 0.$$
It follows the family of critical points
$$x= \frac{\mu_1}{\mu_2-2},\,\, y=\frac{1}{4}\mu_1 \mu_2, \,\,z=\frac{1}{2}\mu_2,\,\,\mu_1 \leq  0,\,\,\mu_2\leq 0.$$

The nature of each critical point is fixed by the signature of the quadratic form $(4-\mu_2)dxdy-\mu_1 dxdz+2dz^2$
restricted to $\frac{\mu_2}{2}dx-dy=0,\,\frac{\mu_1}{\mu_2-2}dy + dz =0$. It follows the restriction
$q= \mu_2\left(\frac{\mu_1^2 }{(\mu_2 -2)^2}+2 -\frac{\mu_2}{2}\right)\,dx^2$, which is negative definite.
All critical points are maximum points. The manifold of critical points has the implicit Cartesian equation
$$xz(z-1)=y,\,x\geq 0, y\geq 0,\, z\leq 0.$$
The maximum value of the function $f$ is
$$f(x(\mu_1,\mu_2), y(\mu_1,\mu_2), z(\mu_1,\mu_2))= \frac{\mu_1^2 \mu_2}{2(\mu_2-2)}+ \frac{1}{4}\mu_2^2.$$

If we change the constraints into
$$zdx-dy\leq 0,\, xdy+dz \leq 0,$$
then it will be sufficient to have positive multipliers and a positive
definite quadratic form $q$ in order that each critical point becomes a minimum point.

The PDEs system which gives us $c_1(\mu_1 , \mu ), c_2(\mu_1 , \mu_2 )$ is (see 1.2 )
$$\mu_1 \, \frac{\partial c_1}{\partial \mu_1 } + \mu_2 \, \frac{\partial c_1}{\partial \mu_2} = - \frac {\mu_1 \mu_2 }{\mu_2 - 2 }$$
$$\mu_1 \, \frac{\partial c_2}{\partial \mu_1 } + \mu_2 \, \frac{\partial c_2}{\partial \mu_2} = - \frac{1}{2} \, \mu_2 + \left ( \frac {\mu_1 }{\mu_2 - 2} \right )^2 \,, $$
with solutions, respectively,
$$c_1 (\mu_1 , \mu_2 ) = - \mu_1 - \frac {2\, \mu_1 }{\mu_2 } \, \ln |\mu_2 -2| + \alpha _1 \left ( \frac {\mu_1}{ \mu_2} \right ) \,,$$
$$c_2 (\mu_1 , \mu_2 ) = - \frac {1}{2} \, \mu_2 - \frac {\mu_1^2}{\mu_2^2} \, \left ( \frac {2}{\mu_2 -2} - \ln |\mu_2 -2| \right ) +
\alpha _2 \left ( \frac {\mu_1}{ \mu_2} \right )\,,$$
where $\alpha _1\,, \alpha _2 $ are arbitrary functions. The condition $\displaystyle \frac{\partial c_1}{\partial \mu_2} =
\frac{\partial c_{2}}{\partial \mu_1} $ is verified, for instance, if $\displaystyle \alpha_1 \left ( \frac {\mu_1}{\mu_2} \right ) = ct. = \alpha _1 $
and $\displaystyle \alpha _2 \left ( \frac {\mu_1}{\mu_2 } \right ) = - \frac {\mu_1^2}{\mu_2^2} + \alpha _2\,, \alpha _2 = ct. $
Finally, the Lagrange dual function $\theta (\mu_1, \mu_2) = f_* + \mu_1 c_1 + \mu_2 c_2 $ is
$$\theta (\mu_1, \mu_2) =  - \frac {\mu_1^2}{2} - \frac {1}{4} \mu_2^2 - \frac { \mu_1^2}{\mu_2} \ln |\mu_2 -2| +
\alpha _1 \mu_1 + \alpha _2  \mu_2 \,.$$

{\bf Nonholonomic Wolfe dual program} Find the extrema of the function
$$(\mu_1,\mu_2)\to \varphi(\mu_1,\mu_2)=f(x(\mu_1,\mu_2), y(\mu_1,\mu_2), z(\mu_1,\mu_2)),\,\,\mu_1 \leq 0,\,\,\mu_2 \leq 0.$$
Since
$$\varphi(\mu_1,\mu_2)= \frac{\mu_1^2 \mu_2}{2(\mu_2-2)}+ \frac{1}{4}\mu_2^2\geq 0,$$
in the conditions of the problem, the critical point $\mu_1=0,\mu_2=0$ is a minimum point.
Also, all the points of the form $(\mu_1 <0,\mu_2=0)$
are minimum points.

\section{Extrema in canonical coordinates}

The next Theorem, which gives the canonical Pfaff forms (and hence canonical nonholonomic constraints),
is named after Jean Gaston Darboux who established it as the solution of the Pfaff problem (see \cite{D}, \cite{S}, \cite{Vr}).

\begin{theorem} ({\bf Darboux Theorem})
Suppose that $\omega$ is a differential $1$-form on an $n$ dimensional manifold, such that $d\omega$ has constant rank $p$. If
$\omega \wedge (d\omega)^p=0$ everywhere,
then there is a local system of coordinates $x^1,...,x^p, y^1, ..., y^p$ in which
$$\omega = \sum_{i=1}^p x^i dy^i.$$
If, on the other hand, $\omega \wedge (d\omega)^p \neq 0$ everywhere,
then there is a local system of coordinates $x^1,...,x^p$, $y^1, ..., y^p, z$ in which
$$\omega = \sum_{i=1}^p x^i dy^i + dz\,\, \hbox{(contact form)}$$
or
$$\omega = \frac{1}{2}\left(\sum_{i=1}^p x^i dy^i- \sum_{i=1}^p y^i dx^i\right)+dz\,\, \hbox{(symmetric normal form)}.$$
\end{theorem}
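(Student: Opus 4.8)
The plan is to reduce $\omega$ to its normal form by quotienting out the \emph{Cauchy characteristics}, the dichotomy $\omega\wedge(d\omega)^p=0$ versus $\omega\wedge(d\omega)^p\neq 0$ being exactly the distinction between the even class $2p$ and the odd class $2p+1$. Write the rank of the $2$-form $d\omega$ as $2p$ (the hypothesis being that this rank is constant), so that $(d\omega)^p\neq 0$ while $(d\omega)^{p+1}=0$. On the manifold $M$ of dimension $n$ I would first introduce the characteristic distribution
$$C_x=\{X\in T_xM : \iota_X\omega=0,\ \iota_X d\omega=0\},$$
where $\iota_X$ denotes interior product, and show that under the constant-rank hypotheses $C$ has constant rank $n-2p$ in the case $\omega\wedge(d\omega)^p=0$ and constant rank $n-2p-1$ in the case $\omega\wedge(d\omega)^p\neq 0$.

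Second, I would prove that $C$ is involutive. For sections $X,Y$ of $C$ the identity $\omega([X,Y])=X(\omega(Y))-Y(\omega(X))-d\omega(X,Y)$ gives $\iota_{[X,Y]}\omega=0$, while Cartan's formula together with $\iota_X d\omega=0$ gives $\mathcal{L}_X d\omega=d\iota_X d\omega=0$, whence $\iota_{[X,Y]}d\omega=\mathcal{L}_X\iota_Y d\omega-\iota_Y\mathcal{L}_X d\omega=0$. By the Frobenius theorem $C$ integrates to a foliation; passing to a local leaf space $N$, the form $\omega$ descends to a form $\bar\omega$ with no characteristics, living on a manifold of dimension exactly $2p$ in the even case and $2p+1$ in the odd case. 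In the even case $d\bar\omega$ is then nondegenerate, i.e.\ symplectic, and $\bar\omega\wedge(d\bar\omega)^p$ vanishes automatically for degree reasons; in the odd case $\bar\omega\wedge(d\bar\omega)^p\neq 0$, i.e.\ $\bar\omega$ is a contact form.

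Third, I would treat the two cores. In the even case I apply the symplectic Darboux theorem to $d\bar\omega$, most cleanly by Moser's homotopy method: interpolate $\sigma_t=(1-t)\sigma_0+t\,d\bar\omega$ between the constant-coefficient standard form $\sigma_0$ and $d\bar\omega$, write $\sigma_1-\sigma_0=d\mu$, and integrate the time-dependent field $X_t$ defined by $\iota_{X_t}\sigma_t=-\mu$, obtaining coordinates with $d\bar\omega=\sum_{i=1}^p dx^i\wedge dy^i$. Then $\bar\omega-\sum_{i=1}^p x^i dy^i$ is closed, hence locally $=dh$, and a further application of Moser's trick at the level of the $1$-form absorbs this exact remainder to yield $\bar\omega=\sum_{i=1}^p x^i dy^i$, which pulls back to the asserted form. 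In the odd case I use the Reeb field $R$ determined by $\iota_R\bar\omega=1,\ \iota_R d\bar\omega=0$; a flow box for $R$ furnishes a coordinate $z$ with $R=\partial_z$ and reduces $d\bar\omega$ to a symplectic form on a $2p$-dimensional transversal, so that the even-case normal form gives $\bar\omega=dz+\sum_{i=1}^p x^i dy^i$. The symmetric normal form follows from the identity $\sum_{i=1}^p x^i dy^i=\tfrac12\sum_{i=1}^p(x^i dy^i-y^i dx^i)+\tfrac12\,d(\sum_{i=1}^p x^i y^i)$ together with the substitution $z\mapsto z+\tfrac12\sum_{i=1}^p x^i y^i$.

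The main obstacle is the reduction step itself: verifying that $C$ has \emph{constant} rank, which is precisely where the constant-rank hypothesis on $d\omega$ and the uniform sign of $\omega\wedge(d\omega)^p$ enter, and checking that $\bar\omega$ is well defined on the leaf space. Once constancy of rank is secured, involutivity and the Frobenius quotient are routine, and the symplectic and contact cores are classical. A secondary delicate point is absorbing the closed remainder $dh$ in the even case so as to recover the primitive $\sum_{i=1}^p x^i dy^i$ exactly rather than merely up to an exact $1$-form; this is why Moser's trick must be applied to the $1$-form and not only to its differential.
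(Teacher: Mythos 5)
A preliminary remark: the paper itself contains no proof of this theorem — it is stated as Darboux's classical result with pointers to the references of Darboux, Sternberg and Vranceanu — so your proposal has to stand on its own rather than be compared to an argument in the text. Its skeleton is the standard modern route (Cauchy--characteristic reduction, Frobenius, then Moser), and your first two steps are correct as written: with the strict kernel $C_x=\{X:\iota_X\omega=0,\ \iota_X d\omega=0\}$, the pointwise linear-algebra fact that $\omega\wedge(d\omega)^p=0$ holds iff $\omega$ annihilates $\ker d\omega$ gives exactly the constant ranks $n-2p$ and $n-2p-1$ you claim; the involutivity computation is right; and $\mathcal{L}_X\omega=\iota_X d\omega+d\iota_X\omega=0$ for sections $X$ of $C$ shows $\omega$ is basic, so $\bar\omega$ on the local leaf space exists and inherits the symplectic (resp.\ contact) property.

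The genuine gap is in the even-case core, and it is not a removable technicality: the conclusion you are trying to force there is \emph{false} at zeros of $\omega$, and your ``Moser at the level of the $1$-form'' step is exactly where this bites. Take $\omega=\frac12(x\,dy-y\,dx)$ on $R^2$ (or the same form on $R^3$, to respect $n\geq 3$): $d\omega=dx\wedge dy$ has constant rank $2$ and $\omega\wedge d\omega=0$, yet $\omega$ cannot equal $\tilde x\,d\tilde y$ in any chart containing the origin, since the zero set of $\tilde x\,d\tilde y$ is the hypersurface $\{\tilde x=0\}$ while $\omega$ vanishes only at the origin (resp.\ only on the $z$-axis). Concretely, your Moser equation $\mathcal{L}_{X_t}\omega_t=-dh$ unwinds to $\iota_{X_t}\sigma+d(\iota_{X_t}\omega_t)=-dh$, and solving it reduces to integrating an ODE along the Liouville field $Z_t$ defined by $\iota_{Z_t}\sigma=\omega_t$; this is possible only where $Z_t$, equivalently $\omega_t$, is nonvanishing, and no choice of primitive $h$ or of interpolation path rescues it at a zero of $\omega$. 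So you must (i) add the hypothesis $\omega(x_0)\neq 0$ — the paper assumes globally that $\omega$ has no zeros, but the theorem as isolated does not say so — and (ii) choose the symplectic Darboux chart so that $\sum_{i} x^i dy^i$ agrees with $\bar\omega$ at the base point (possible since the linear symplectic group acts transitively on nonzero covectors), which makes $dh(x_0)=0$ and keeps $\omega_t(x_0)\neq 0$ along the whole Moser path. Your odd case inherits the same defect, because it calls the even case on the transversal slice form $\beta=\bar\omega-dz$, which can vanish at the base point: for the symmetric contact form $dz+\frac12(x\,dy-y\,dx)$ the slice form is precisely the counterexample above. The repair is to first replace $z$ by $z+\ell(w)$ with $\ell$ linear, chosen so that $\beta-d\ell$ is nonzero at the point, and only then invoke the even case; alternatively, prove the contact normal form directly by Moser and deduce the even case from it by flowing along the (nonvanishing) Liouville field. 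With these corrections your outline becomes a complete and standard proof; without them, both cores fail at exactly the points you labeled ``delicate.''
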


From the normal form of Darboux, we see that the maximal integral manifolds are of
dimension $p$. For the contact form equation
$$\omega = \sum_{i=1}^p x^i dy^i + dz=0,$$
they are given by
$$z=f(y^1,...,y^{p}), \,\,x_1=- \frac{\partial f}{\partial y^1},...,x_p=- \frac{\partial f}{\partial y^p},$$
where $f$ is a $C^2$ arbitrary function.

\subsection{Case of even number of variables}

Let us find the extrema of a function $f(x,y), x=(x^i), y=(y^i), i=1,...,n$, subject to a nonholonomic constraint
written as Pfaff equation $\omega = x^1 dy^1 + ... + x^n dy^n =0$.

The constrained critical points are solutions of the system
$$\frac{\partial f}{\partial x^i} =0,\, \frac{\partial f}{\partial y^i}+\mu x^i =0.$$

\subsection{Case of odd number of variables}

Let us find the extrema of a function
$f(x,y,z), x=(x^i), y=(y^i), i=1,...,n$ subject to a nonholonomic constraint
$\omega = x^1 dy^1 + ... + x^n dy^n + dz=0$. The constrained
critical points are solutions of the system
$$\frac{\partial f}{\partial x^i} =0,\, \frac{\partial f}{\partial y^i} + \mu x^i=0,\,\frac{\partial f}{\partial z} + \mu =0.$$

To decide the type of a critical point $(x_0,y_0,z_0)$, we use the restriction of the quadratic form
$Q= d^2f(x_0,y_0,z_0)+ \mu \delta_{ij}dx^idy^j$ to the hyperplane $\delta_{ij} x^{i}_0 dy^j+dz=0$
and its signature. Since
$$d^2f= \frac{\partial^2 f}{\partial x^i\partial x^j}dx^i dx^j+ \frac{\partial^2 f}{\partial y^i\partial y^j}dy^i dy^j + 2 \frac{\partial^2 f}{\partial x^i\partial y^j}dx^i dy^j$$
$$+ 2\left(\frac{\partial^2 f}{\partial x^i\partial z}dx^i +\frac{\partial^2 f}{\partial y^i\partial z}dy^i\right)dz + \frac{\partial^2f}{\partial z^2}dz^2,$$   the restriction of $Q$ is
$$q= \frac{\partial^2 f}{\partial x^i\partial x^j}dx^i dx^j+\frac{\partial^2 f}{\partial y^i\partial y^j}dy^i dy^j + 2\frac{\partial^2 f}{\partial x^i\partial y^j}dx^i dy^j + \mu \delta_{ij}dx^idy^j$$
$$- 2\left(\frac{\partial^2 f}{\partial x^i\partial z}dx^i + \frac{\partial^2 f}{\partial y^i\partial z}dy^i\right) \delta_{kl} x^{k}_0 dy^l + \frac{\partial^2f}{\partial z^2}(\delta_{kl} x^{k}_0 dy^l )^2.$$

\subsubsection{Another point of view}

The general solution of the Pfaff equation $\omega = x^1 dy^1 + ... + x^n dy^n + dz=0$ is
$$z=\varphi(y),\,\,x=- \frac{\partial \varphi}{\partial y}(y),\,\, \hbox{where}\,\, \varphi\,\, \hbox{is arbitrary}.$$

Consequently the previous nonholonomic program can be written
$$\min f(x,y,z)\,\,\hbox{subject to}\,\, z=\varphi(y),\,\, x=- \frac{\partial \varphi}{\partial y}(y).$$
In this form, it is similar to a classical program, but the function $\varphi$ is arbitrary. For each $\varphi$,
we attach a Lagrangian
$$L(x,y,z,\lambda_1,\lambda_2)= f(x,y,z) +\lambda_1(z-\varphi(y))+\lambda_2\left(x+ \frac{\partial \varphi}{\partial y}(y)\right).$$
It follows the system which describes the critical points
$$\frac{\partial L}{\partial x}= \frac{\partial f}{\partial x} +\lambda_2=0,\,\, \frac{\partial L}{\partial z}= \frac{\partial f}{\partial z} +\lambda_1=0$$
$$\frac{\partial L}{\partial y}= \frac{\partial f}{\partial y} -\lambda_1 \frac{\partial \varphi}{\partial y}+\lambda_2 \frac{\partial^2 \varphi}{\partial y \partial y}=0$$
$$\frac{\partial L}{\partial \lambda_1}= z-\varphi(y)=0,\,\,\frac{\partial L}{\partial \lambda_2}= x+ \frac{\partial \varphi}{\partial y}(y)=0.$$
This means that the critical points with respect to $x$, $y$ and $z$ must verifies the constraints of the initial program.

In this context, it is very clear what means a Lagrange dual program. The dual function is
$$\psi(\lambda_1,\lambda_2)= L(x(\lambda_1,\lambda_2),y(\lambda_1,\lambda_2),z(\lambda_1,\lambda_2),\lambda_1,\lambda_2).$$

{\bf Example}  Find extremum points of the function
$f(x,y,z)=x+y+z+\frac{1}{2}(x^2+y^2+z^2)$ subject to $dz-xdy=0$.

The constrained critical points are solutions of the system
$$\frac{\partial f}{\partial x}=0, \frac{\partial f}{\partial y}-\mu x=0, \frac{\partial f}{\partial z}+\mu=0,$$
i.e., $x=1, y=1-\mu, z=1+\mu$.

The nature of critical points is determined by the signature of the restriction $q$ of
the quadratic form $Q= -(dx^2+dy^2+dz^2)-\mu dx dy$ to the plane $dz=dy$. It follows
$q= -(dx^2+2dy^2) - \mu dx dy$. This quadratic form is negative definite for $\mu^2 <8$, i.e.,
$\mu \in (-2\sqrt{2},2\sqrt{2})$. In this case, all critical points are maximum points.

The function $\varphi(\mu)=f(x(\mu),y(\mu),z(\mu))= \frac{3}{2}-\mu^2$ is increasing on $(-2\sqrt{2},0)$
and decreasing on $(0,2\sqrt{2})$. Also, $\inf \varphi(\mu)= \frac{3}{2}-8$.

\subsubsection{Passing to an even number of variables}

The point $(x,y,z)$ belong to the contact manifold defined by $\omega =xdy +dz=0$. Let $t$ be the number to be multiplied with
$1$-form $\omega$ to obtain a point of a symplectic $(2n+2)$-manifold described by the $1$-form $\eta=t\omega = tx dy + t dz$.
If we pass to the coordinates $P= (P^I)=(p,p_0), p=tx,p_0=t$, $Q= (Q^I)=(q,q_0), q=y, q_0=z$, then $\eta = PdQ$ and hence $d\eta=dP \wedge dQ$.

Having in mind the changing of the variables, the function $f(x,y,z)$ becomes $f\left(\frac{p}{p_0}, q, q_0\right)=\varphi (P,Q)$.
The constrained critical points are solutions of the system
$$\frac{\partial \varphi}{\partial P^I} =0,\, \frac{\partial \varphi}{\partial Q^i}+\mu x^i =0,\,\frac{\partial \varphi}{\partial z}+ \mu=0.$$

\subsubsection{Contact Hamiltonian}

Let $X= \dot x^i\frac{\partial}{\partial x^i}+\dot y^i\frac{\partial}{\partial y^i} + \dot z\frac{\partial}{\partial z},i=1,...,n$ be the contact vector field.
Let $K(x,y,z)$ be the contact Hamiltonian, which is defined by
$$K(x,y,z)= \omega(X),\,\, X\rfloor d\omega|_{\omega=0} =dK.$$
In case of $\omega = xdy +dz$, we have $d\omega|_{\omega=0}=-dx\wedge dy$. It follows the Hamiltonian
$K=x\dot y + \dot z$ and the contact flow
$$\dot x= -\frac{\partial K}{\partial y} + x \frac{\partial K}{\partial z},\,\,\dot y= \frac{\partial K}{\partial x},\,\, \dot z = K-x \frac{\partial K}{\partial x}.$$

{\bf Open problem} Find extrema of the contact Hamiltonian $K(x,y,z)$
constrained by $\omega = xdy +dz=0$.


Authors' Address:\\

Constantin Udri\c ste, M\u ad\u alina Constantinescu, Ionel \c Tevy, Oltin Dogaru\\
University Politehnica of Bucharest\\
Faculty of Applied Sciences\\
Department of Mathematics and Informatics\\
Splaiul Independentei 313\\
Bucharest 060042, Romania\\
Email: udriste@mathem.pub.ro; vascatevy@yahoo.fr

\end{document}